\newtheorem{theorem}{Theorem}[section]
\newtheorem{lemma}[theorem]{Lemma}
\newtheorem{prop}[theorem]{Proposition}
\newtheorem*{theorem*}{Theorem}
\theoremstyle{definition}
\newtheorem{definition}{Definition}[section]
\theoremstyle{remark}
\numberwithin{equation}{section}
\author{}
\address{}
\keywords{simplicial complex, $8$-location, $SD'$ property, Gromov hyperbolicity, minimal displacement set}
\subjclass[2010]{Primary 20F67, Secondary 05C99}
\begin{document}

\title[$8$-located simplicial complexes with the SD'-property]{Classifying spaces for families of subgroups of $8$-located groups}

\author[Ioana-Claudia Laz\u{a}r]{
Ioana-Claudia Laz\u{a}r\\
Politehnica University of Timi\c{s}oara, Dept. of Mathematics,\\
Victoriei Square $2$, $300006$-Timi\c{s}oara, Romania\\
E-mail address: ioana.lazar@upt.ro}

\begin{abstract}

We investigate the structure of the minimal displacement set in $8$-located complexes with the SD'-property. We show that such set embeds isometrically into the complex. Since $8$-location and simple connectivity imply Gromov hyperbolicity, the minimal displacement set in such complex is systolic. Using these results, we construct a low-dimensional classifying space for the family of virtually cyclic subgroups of a group acting properly on an $8$-located complex with the SD'-property.

\end{abstract}

\maketitle

\section{Introduction}

Curvature can be expressed both in metric and combinatorial terms. Metrically, one can refer to ’nonpositively
curved’ (respectively, ’negatively curved’) metric spaces in the sense of Aleksandrov, i.e. by comparing small triangles in the space with
triangles in the Euclidean plane (hyperbolic plane). These are the CAT(0) (respectively, CAT(-1)) spaces. Combinatorially, one looks for local combinatorial conditions implying
some global features typical for nonpositively curved metric spaces.

A very important combinatorial condition of this type was formulated by Gromov \cite{Gro} for cubical complexes, i.e. cellular complexes
with cells being cubes. Namely, simply connected cubical complexes with links (that can be thought as small spheres around vertices)
being flag (respectively, $5$-large, i.e.\ flag-no-square) simplicial complexes carry a canonical CAT(0) (respectively, CAT(-1)) metric.
Another important local combinatorial condition is local $k$-largeness, introduced by Januszkiewicz-{\' S}wi{\c a}tkowski \cite{JS1} and Haglund \cite{Hag}. A flag simplicial complex is \emph{locally $k$-large} if its links do not contain `essential' loops of length less than $k$.
In particular, simply connected locally $7$-large simplicial complexes, i.e.\ \emph{$7$-systolic} complexes, are Gromov hyperbolic \cite{JS1}.
The theory of \emph{$7$-systolic groups}, that is groups acting geometrically on $7$-systolic complexes, allowed to provide important examples of highly dimensional Gromov hyperbolic groups \cite{JS0,JS1,O-chcg,OS,Prz,E1}.

However, for groups acting geometrically on CAT(-1) cubical complexes or on $7$-systolic complexes, some very restrictive limitations
are known. For example, $7$-systolic groups are in a sense `asymptotically hereditarily aspherical', i.e.\ asymptotically they can not contain
essential spheres. This yields in particular that such groups are not fundamental groups of negatively curved manifolds of dimension
above two; see e.g. \cite{JS2,O-ci,O-ib,OS,Gom,O-ns}.
This rises need for other combinatorial conditions, not imposing restrictions as above. In \cite{O-sdn,ChOs,BCCGO,ChaCHO} some conditions
of this type are studied -- they form a way of unifying CAT(0) cubical and systolic theories.

On the other hand, Osajda \cite{O-8loc} introduced a local combinatorial condition of \emph{$8$-location}, and used it to provide a new solution to Thurston's
problem about hyperbolicity of some $3$-manifolds. In \cite{L-8loc} we study of a version of $8$-location, suggested in \cite[Subsection 5.1]{O-8loc}.
This $8$-location says that homotopically trivial loops of length at most $8$ admit filling diagrams with one internal vertex.
However, in the new $8$-location essential $4$-loops are allowed.
In \cite{L-8loc} (Theorem $4.3$) it is shown that simply connected,
$8$-located simplicial complexes are Gromov hyperbolic. In the current paper we give an application to this result.

We focus on the study of the minimal displacement set in an $8$-located complex satisfying the $SD'$-property.
One of the paper's results states that such set is isometrically embedded into the complex. Moreover, we show that such set is Gromov hyperbolic. In particular, it is systolic. This follows as an application of the fact that $8$-located complexes with the $SD'$-property are Gromov hyperbolic (see \cite{L-8loc}).

For CAT(0) spaces and systolic complexes, however, studying the structure of the minimal displacement set is useful when constructing a low-dimensional classifying space for the family of virtually cyclic subgroups of a group acting properly on a CAT(0) space, respectively on a systolic complex (see \cite{BH}, \cite{OP}). We expect similar results in the $8$-located case.  Knowing that the minimal displacement set of an $8$-located complex with the $SD'$-property embeds isometrically into the complex and it is systolic, we will be able to apply results proven in\cite{E2} and \cite{OP} on systolic complexes.

\textbf{Acknowledgements}.
The author would like to thank Victor Chepoi, Damian Osajda and Tomasz Prytu{\l}a for useful discussions.
This work was partially supported by the grant $346300$ for IMPAN from the Simons Foundation and the matching $2015-2019$ Polish MNiSW fund.

\section{Preliminaries}

Let $X$ be a simplicial complex. We denote by $X^{(k)}$ the $k$-skeleton of
$X, 0 \leq k < \dim X$. A subcomplex $L$ in $X$ is called \emph{full} as a subcomplex of $X$ if any simplex of $X$ spanned by a set of vertices in $L$, is a simplex of $L$. For a set
$A = \{ v_{1}, ..., v_{k} \}$ of vertices of $X$, by $\langle  A \rangle$ or by $\langle  v_{1}, ..., v_{k} \rangle$ we denote the \emph{span} of $A$, i.e. the
smallest full subcomplex of $X$ that
contains $A$. We write $v \sim v'$ if $\langle  v,v' \rangle \in X$ (it can happen that $v = v'$). We write $v \nsim v'$ if $\langle  v,v' \rangle \notin X$.
 We call $X$ {\it flag} if any finite set of vertices which are pairwise connected by
edges of $X$, spans a simplex of $X$.

A {\it cycle} ({\it loop}) $\gamma$ in $X$ is a subcomplex of $X$ isomorphic to a triangulation of $S^{1}$. A \emph{full cycle} in $X$ is a cycle that is full as a subcomplex of $X$.
A $k$-\emph{wheel} in $X$ $(v_{0}; v_{1}, ..., v_{k})$ (where $v_{i}, i \in \{0,..., k\}$
are vertices of $X$) is a subcomplex of $X$ such that $\gamma = (v_{1}, ..., v_{k})$ is a full cycle and $v_{0} \sim v_{1}, ..., v_{k}$.
The \emph{length} of $\gamma$ (denoted by $|\gamma|$) is the number of edges in $\gamma$. Given two cycles $\alpha, \beta$ of $X$, we denote by $\alpha \star \beta$ their concatenation.

We define the \emph{combinatorial metric} on the $0$-skeleton of $X$ as the number of edges in the shortest $1$-skeleton path joining two given vertices.

A \emph{ball (sphere)}
$B_{i}(v,X)$ ($S_{i}(v,X)$) of radius $i$ around some vertex $v$ is a full subcomplex of $X$ spanned by vertices at combinatorial distance at most $i$ (at combinatorial distance $i$) from $v$.

\begin{definition}\label{def-2.2}
A simplicial complex is $m$-\emph{located}, $m \geq 4$, if it is flag and every full homotopically trivial loop of length at most $m$ is contained in a $1$-ball.
\end{definition}

Let $\sigma$ be a simplex of $X$. The \emph{link} of $X$ at $\sigma$, denoted by $X_{\sigma}$, is the subcomplex of $X$ consisting of all simplices of $X$ which are disjoint from $\sigma$ and which, together
with $\sigma$, span a simplex of $X$. We call a flag simplicial complex $k$\emph{-large} if there are no
full $j$-cycles in $X$, for $j<k$. We say $X$ is \emph{locally} $k$\emph{-large} if all its links are $k$-large. We call $X$ $k$\emph{-systolic} if it is connected, simply connected and \emph{locally} $k$\emph{-large}. For $k = 6$, we abbreviate $k$-systolic to systolic.

We introduce further a global combinatorial condition on a flag simplicial complex.

\begin{definition}\label{def-2.3}
Let $X$ be a flag simplicial complex. For a vertex $O$ of $X$ and a natural number $n$, we say that $X$ satisfies \emph{the property $SD'_{n}(O)$}
if for every $i \in \{1, ..., n\}$ we have:
\begin{enumerate}
\item (T) (triangle condition): for every edge $e \in S_{i+1}(O)$, the intersection $X_{e} \cap B_{i}(O)$ is non-empty;
\item (V) (vertex condition): for every vertex $v \in S_{i+1}(O)$, and for every two vertices $u,w \in X_{v} \cap B_{i}(O)$,
there exists a vertex $t \in X_{v} \cap B_{i}(O)$ such that $t \sim u,w$. \end{enumerate}
We say $X$ satisfies \emph{the property $SD'(O)$} if $SD'_{n}(O)$ holds for each natural number $n$. We say $X$ satisfies \emph{the property $SD'$} if
$SD'_{n}(O)$ holds for each natural number $n$ and for each vertex $O$ of $X$.
\end{definition}

The following result is given in \cite{O-8loc}.

\begin{prop}\label{2.1a}
A simplicial complex which satisfies the property $SD'(O)$ for some vertex $O$, is simply connected.
\end{prop}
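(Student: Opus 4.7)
The plan is to establish simple connectedness of the component of $O$ by showing that every edge loop lying in some ball $B_n(O)$ is null-homotopic in $X^{(2)}$. Path-connectedness of that component is immediate from the triangle condition $(T)$, since every vertex of $S_i(O)$ has a neighbour in $B_{i-1}(O)$ and one can descend to $O$ along an edge path. As every edge loop is compact and therefore supported in some $B_n(O)$, it suffices to proceed by induction on $n$, the base case $n=0$ (the constant loop at $O$) being trivial.

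For the inductive step I fix a loop $\gamma \subset B_n(O)$ and produce a homotopy inside $X^{(2)}$ from $\gamma$ to a loop $\gamma' \subset B_{n-1}(O)$. I decompose $\gamma$ into its maximal subarcs whose vertices lie in $S_n(O)$; away from these arcs $\gamma$ already sits in $B_{n-1}(O)$, so it suffices to modify each arc separately. Let $(v_1, \dots, v_k)$ be one such arc, bounded on $\gamma$ by vertices $u, w \in B_{n-1}(O)$. If $k=1$, condition $(V)$ at $v_1$ applied to $u, w \in X_{v_1} \cap B_{n-1}(O)$ yields $t \in X_{v_1} \cap B_{n-1}(O)$ with $t \sim u,w$; the two triangles $\langle u, v_1, t\rangle$ and $\langle v_1, w, t\rangle$, which are simplices of $X$ by flagness, replace $u\, v_1\, w$ by $u\, t\, w$.

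If $k \geq 2$, condition $(T)$ applied to each edge $\langle v_i, v_{i+1}\rangle \subset S_n(O)$ produces $s_i \in B_{n-1}(O)$ adjacent to both $v_i$ and $v_{i+1}$; then condition $(V)$ at each $v_i$, applied to the appropriate pair among $\{u, s_1\}$, $\{s_{i-1}, s_i\}$, $\{s_{k-1}, w\}$, yields $t_i \in X_{v_i} \cap B_{n-1}(O)$ adjacent to both members of the pair. The arc is replaced by the path $u\, t_1\, s_1\, t_2\, s_2\, \dots\, s_{k-1}\, t_k\, w$ in $B_{n-1}(O)$, and the polygonal disc between the old and new paths is triangulated by the simplices $\langle u, v_1, t_1\rangle$, $\langle v_1, t_1, s_1\rangle$, $\langle v_1, s_1, v_2\rangle$, $\langle v_2, s_1, t_2\rangle$, and so on, all of which belong to $X$ by flagness once the required adjacencies are checked.

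Processing every $S_n$-arc of $\gamma$ in this way yields a homotopy to a loop in $B_{n-1}(O)$, which is null-homotopic by the inductive hypothesis, completing the induction. The main obstacle is the bookkeeping in the arc-replacement step: one has to confirm that at each vertex of the arc the correct pair of $B_{n-1}(O)$-neighbours is available for an application of $(V)$, and that every triangular $2$-cell used in the filling is genuinely a simplex of $X$. Flagness of $X$ is the crucial ingredient that turns these pairwise adjacencies into the actual $2$-simplices supporting the homotopy.
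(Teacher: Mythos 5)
The paper does not prove this proposition itself; it is imported from \cite{O-8loc}. Your argument is the standard one for statements of this type: push a loop from $B_n(O)$ into $B_{n-1}(O)$ using (T) and (V) and induct on $n$. The core computation is correct. For an arc $(v_1,\dots,v_k)$ in $S_n(O)$ bounded by $u,w\in B_{n-1}(O)$, the vertices $s_i$ given by (T) and $t_i$ given by (V) do have all the pairwise adjacencies needed for the triangles $\langle u,v_1,t_1\rangle$, $\langle v_1,t_1,s_1\rangle$, $\langle v_1,s_1,v_2\rangle$, $\langle v_2,s_1,t_2\rangle,\dots$, and flagness promotes these to $2$-simplices, so each arc is homotoped rel endpoints into $B_{n-1}(O)$ as you claim.

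There is, however, one case your decomposition does not cover: a loop $\gamma$ contained entirely in $S_n(O)$, with no vertex in $B_{n-1}(O)$. Then there are no maximal $S_n$-arcs ``bounded by $u,w\in B_{n-1}(O)$'' and no complementary portion of $\gamma$ already lying in $B_{n-1}(O)$, so the inductive step as written says nothing. The fix is your own construction run cyclically: for each edge $\langle v_i,v_{i+1}\rangle$ of $\gamma$ (indices modulo the length) take $s_i\in B_{n-1}(O)$ adjacent to $v_i,v_{i+1}$ by (T), then $t_i\in X_{v_i}\cap B_{n-1}(O)$ adjacent to $s_{i-1},s_i$ by (V), and triangulate the annulus between $\gamma$ and $(t_1,s_1,t_2,s_2,\dots)$ exactly as in your arc case; loops of length at most $3$ are already filled by flagness. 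Two smaller points. First, the definition of $SD'_n(O)$ only provides (T) and (V) for $i\ge 1$, so the passage from $B_1(O)$ to $B_0(O)$ is not literally an instance of your step; it should be taken as the base case, where any loop in $B_1(O)$ cones off to $O$ since every vertex of $B_1(O)$ is adjacent or equal to $O$. Second, the fact that every vertex of $S_i(O)$ has a neighbour in $B_{i-1}(O)$ is just the definition of the combinatorial distance, not a consequence of (T) (which concerns edges of spheres); and since $SD'(O)$ is vacuous on components of $X$ not containing $O$, the statement must be read, as you implicitly do, for the component of $O$.
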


\begin{definition}
A group acting properly discontinously and cocompactly, by automorphisms,
on an $m$-located simplicial complex with the $SD'$ property, is called an \emph{$m$-located} group, $m \geq 4$.
\end{definition}

\begin{definition}\label{def-2.4}
Given a path $\gamma = (v_{0}, v_{1}, ..., v_{n})$ in a simplicial complex $X$, one can \emph{tighten} it to a full path $\gamma'$ with the same endpoints by repeatedly applying the following operations:

$\bullet$ if $v_{i}$ and $v_{j}$ are adjacent in $X$ for some $j > i+1$, then remove from the sequence all $v_{k}$ where $i < k < j$;

$\bullet$ if $v_{i}$ and $v_{j}$ coincide in $X$ for some $j > i$, then remove from the sequence all $v_{k}$ where $i < k \leq j$.

We call $\gamma'$ a \emph{tightening} of $\gamma$. We allow the trivial case when $\gamma$ is already full. Then its tightening is the path itself.
\end{definition}

\subsection{Minimal displacement set for CAT(0) spaces}

For CAT(0) spaces the minimal displacement set is studied in \cite{BH}.

\begin{definition} Let $X$ be a metric space and let $h$ be an isometry of $X$. The \emph{displacement function} of $h$ is the function $d_{h} : X \rightarrow \mathbf{R}$ defined by $d_{h}(x) = d(h(x),x)$.
The \emph{translation length} of $h$ is the number $|h| = \inf \{ d_{h}(x) | x \in X\}$. The set of
points where $d_{h}$ attains this infimum is denoted by $\rm{Min}_{X}(h)$ and it is called the \emph{minimal displacement set}.
\end{definition}

\begin{definition} Let $X$ be a metric space. An isometry $h$ of 
 X is called
\begin{enumerate}
\item \emph{elliptic} if $h$ has a fixed point,
\item \emph{hyperbolic} if $d_{h}$ attains a strictly positive minimum.
\end{enumerate}
\end{definition}

The following result concerns the structure of the minimal displacement set of a hyperbolic isometry $h$ in a CAT(0) space.

\begin{theorem}
Let $X$ be a CAT(0) space.
\begin{enumerate}
\item An isometry $h$ of $X$ is hyperbolic if and only if there exists a geodesic line
$c : \mathbf{R} \rightarrow X$ which is translated non-trivially by $h$; namely $h(c(t)) = c(t+a)$, for
some $a > 0$. The set $c(\mathbf{R})$ is called an axis of $h$. For any such axis, the number
$a$ is equal to $|h|$.

Let $h$ be a hyperbolic isometry of $X$.
\item The axes of $h$ are parallel to each other and their union is $\rm{Min}_{X}(h)$.
\item $\rm{Min}_{X}(h)$ is isometric to a product $Y \times \mathbf{R}$, and the restriction of $h$ to $\rm{Min}_{X}(h)$ is
of the form $(y, t) \rightarrow (y, t+|h|)$, where $y \in Y$, $t \in \mathbf{R}$ (see \cite{BH}, Theorem $6.8$, page $231$).
\end{enumerate}
\end{theorem}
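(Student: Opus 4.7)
The plan is to prove the three parts in sequence, using convexity of the displacement function $d_h$ (valid in any CAT(0) space) and the Flat Strip Theorem. For part~(1) the direction ``axis implies hyperbolic'' is immediate: along an axis $c$ the displacement is the constant $a$, so $|h|\leq a$; convexity of $d_h$ along any geodesic joining a general point to a point of $c$ then forces $a$ to be the infimum. For the converse, I would fix a point $x_{0}$ with $d_h(x_{0})=|h|>0$, let $\sigma$ be the geodesic segment from $x_{0}$ to $h(x_{0})$, and concatenate its images $h^{n}(\sigma)$ for $n\in\mathbf{Z}$ to form a piecewise geodesic curve $c:\mathbf{R}\to X$. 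To promote $c$ to a geodesic line it suffices, by $h$-equivariance, to check that the concatenation angle at $x_{0}$ equals $\pi$; if it were strictly less, the CAT(0) comparison triangle with vertices $h^{-1}(x_{0}),x_{0},h(x_{0})$ would show that the midpoint $m$ of $[h^{-1}(x_{0}),h(x_{0})]$ satisfies $d(m,h(m))<|h|$, contradicting minimality. A local geodesic in a CAT(0) space is globally geodesic, so $c$ is an axis with translation $|h|$.

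For part~(2), given two axes $c,c'$, the function $t\mapsto d(c(t),c'(t))$ is convex (distance between geodesics in a CAT(0) space) and $|h|$-periodic (since $h$ translates both by $|h|$), hence constant. The Flat Strip Theorem then yields an isometric flat strip bounded by $c$ and $c'$, so the two axes are parallel. Conversely, for any $x\in\mathrm{Min}_{X}(h)$ the axis construction from part~(1) applied with $x$ in place of $x_{0}$ produces an axis through $x$, so $\mathrm{Min}_{X}(h)$ is precisely the union of axes.

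For part~(3), I would fix one axis $c_{0}$ and take $Y=\{c(0)\mid c\text{ an axis of }h\}$ as a transversal slice, equipped with the induced metric. The map $Y\times\mathbf{R}\to\mathrm{Min}_{X}(h)$ sending $(y,t)$ to $c_{y}(t)$, where $c_{y}$ is the unique axis through $y$ parameterized consistently with $c_{0}$, is a bijection by part~(2). Applying the Flat Strip Theorem to each pair of axes shows this map is an isometry when $Y\times\mathbf{R}$ carries the Euclidean product metric, and the action of $h$ becomes $(y,t)\mapsto(y,t+|h|)$ by construction.

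The main obstacle I anticipate is the axis-construction step in part~(1), specifically pinning down the angle-$\pi$ argument via CAT(0) comparison rigorously rather than as an informal consequence of convexity of $d_h$. Once that step is secured, parts~(2) and~(3) follow by standard applications of convexity and the Flat Strip Theorem.
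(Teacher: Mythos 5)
This is not a statement the paper proves: it is quoted verbatim as background from Bridson--Haefliger (Theorem II.6.8, cited in the text), so there is no in-paper argument to compare yours against. Your sketch is the standard proof from that source and is essentially correct in outline, but two steps as written do not go through. First, in part (1) you claim that convexity of $d_h$ along a geodesic from a general point $x$ to a point of the axis ``forces $a$ to be the infimum.'' Convexity only gives upper bounds for $d_h$ at interior points of such a geodesic; it cannot produce the lower bound $d_h(x)\geq a$. The standard fix is the stable-translation-length estimate: $n\,d_h(x)\geq d(x,h^n x)\geq d(c(0),h^n c(0))-2d(x,c(0))=na-2d(x,c(0))$, then divide by $n$ and let $n\to\infty$. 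Relatedly, for the converse your angle-$\pi$ argument can be made rigorous, but the cleaner standard route avoids angles entirely: take $m$ the midpoint of $[x_0,h(x_0)]$, note $d(m,h(m))\leq d(m,h(x_0))+d(h(x_0),h(m))=|h|$, so minimality forces equality and hence $h(x_0)\in[m,h(m)]$; equivariance then makes the concatenation a local geodesic, hence a geodesic in CAT(0).

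Second, in part (3) the assertion that ``applying the Flat Strip Theorem to each pair of axes'' yields the isometry with the product $Y\times\mathbf{R}$ is a genuine gap: pairwise flat strips do not by themselves assemble into a global isometric splitting, and they do not show that your transversal $Y$ is convex, complete, or that the product metric is the right one (the triangle inequality across three axes is exactly what is not controlled by pairwise strips). The missing ingredient is the decomposition theorem for the union of all geodesic lines parallel to a given one (Bridson--Haefliger, Theorem II.2.14), which is proved via the Sandwich Lemma and is what actually produces the splitting $\rm{Min}_{X}(h)\cong Y\times\mathbf{R}$ with $Y$ convex; your part (2) shows the axes are pairwise parallel and that their union is $\rm{Min}_{X}(h)$, and then II.2.14 should be invoked to finish. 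With those two repairs your argument coincides with the cited proof.
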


\subsection{Minimal displacement set for systolic complexes}

For systolic complexes the minimal displacement set is studied in \cite{E2}.

Let $h$ be an isometry of a simplicial complex $X$. We define the \emph{displacement function}
$d_{h} : X^{(0)} \rightarrow \mathbf{N}$ by $d_{h}(x) = d_{X}(h(x),x)$. The \emph{translation length} of $h$ is
defined as $|h| = \min_{x \in X^{(0)}} d_{h}(x)$. 
If $h$ does not fix any simplex of $X$, then $h$ is called \emph{hyperbolic}. In such case one has
$|h| > 0$. Otherwise we call the isometry $h$ \emph{elliptic}. For a hyperbolic isometry $h$, we define the minimal displacement set $\rm{Min}_{X}(h)$
as the subcomplex of $X$ spanned by the set of vertices where $d_{h}$ attains its minimum.
Clearly $\rm{Min}_{X}(h)$ is invariant under the action of $h$. 

\begin{theorem}
Let $h$ be a hyperbolic isometry of a
systolic complex $X$. Then the subcomplex $\rm{Min}_{X}(h)$ is a systolic subcomplex, isometrically
embedded into $X$ (see \cite{E2}, Propositions $3.3$ and $3.4$). 
\end{theorem}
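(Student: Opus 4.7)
The plan is to derive both conclusions — isometric embedding and systolicity — from a single structural property: $\mathrm{Min}_X(h)$ is a \emph{convex} subcomplex of $X$, meaning that every $1$-skeleton geodesic in $X$ joining two vertices of $\mathrm{Min}_X(h)$ lies entirely inside $\mathrm{Min}_X(h)$. In systolic geometry, a full convex subcomplex of a systolic complex is itself systolic and isometrically embedded, so this reduction handles both claims at once.

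The first step is to construct an $h$-invariant combinatorial axis through every vertex realizing the minimal displacement. For $v \in \mathrm{Min}_X(h)^{(0)}$, pick a $1$-skeleton geodesic $\gamma_v$ from $v$ to $h(v)$ of length $|h|$ and form the bi-infinite concatenation $\Gamma_v := \bigcup_{n \in \mathbf{Z}} h^n(\gamma_v)$. I would show that $\Gamma_v$ is globally a combinatorial geodesic line: if $d_X(v, h^n(v)) < n|h|$ for some $n \geq 2$, then combining a shorter $v$-to-$h^n(v)$ path with iterates of $h$ would exhibit an intermediate vertex whose displacement is strictly less than $|h|$, contradicting the definition of translation length. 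Hence every vertex of $\Gamma_v$ lies in $\mathrm{Min}_X(h)$, and through each such vertex runs a concrete $h$-invariant axis.

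To establish convexity, I would take $u, v \in \mathrm{Min}_X(h)^{(0)}$ together with a $1$-skeleton geodesic $\alpha$ joining them, and consider the combinatorial ``strip'' bounded by the axes $\Gamma_u$, $\Gamma_v$ and the translates $\alpha, h(\alpha), h^2(\alpha), \ldots$. Using geodesic-extension and projection properties of systolic complexes — where the local $6$-large hypothesis forces interpolating geodesics to behave rigidly — I would show inductively that each vertex on $\alpha$ lies on some $h$-invariant axis, and hence has displacement exactly $|h|$. This propagation argument is the main obstacle: controlling displacement away from the initial axes is precisely where the systolic hypothesis does genuine work, and a careful analysis via minimal disc diagrams of the strip is likely required to rule out non-trivial cancellations.

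Granting convexity, the remaining conclusions follow formally. Fullness of $\mathrm{Min}_X(h)$ is immediate from flagness of $X$; local $6$-largeness transfers because links in $\mathrm{Min}_X(h)$ are full subcomplexes of the corresponding links in $X$, and a full subcomplex of a $6$-large complex is $6$-large; simple connectivity follows from convexity together with the simple connectivity of $X$ by a standard disc-diagram pushing argument. Isometric embedding is then an immediate consequence of convexity, completing the proof.
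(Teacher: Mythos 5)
There is a genuine gap, and it occurs already in your first step. The bi-infinite concatenation $\Gamma_v=\bigcup_n h^n(\gamma_v)$ is only an $|h|$-\emph{local} geodesic: this is exactly what Lemma \ref{3.1} of the paper (Elsener's Fact~3.2) proves, and it is all that can be proved at this stage. Your argument that $d_X(v,h^n(v))<n|h|$ would "exhibit an intermediate vertex whose displacement is strictly less than $|h|$" does not go through: a vertex $x$ on a short $v$-to-$h^n(v)$ geodesic gets a bound on $d(x,h^n(x))$, not on $d(x,h(x))$, and in general $|h^n|$ can be strictly smaller than $n|h|$ (the function $n\mapsto d(v,h^n(v))$ is merely subadditive). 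In fact an $h$-invariant geodesic need not exist at all: Elsener's Theorem~3.5 (quoted as Theorem \ref{3.5} here) only produces an $h^{n}$-invariant geodesic for \emph{some} $n\geq 1$, and only under the extra hypothesis of uniform local finiteness. The property you are assuming --- that every vertex of $\mathrm{Min}_X(h)$ lies on an $h$-invariant axis --- is precisely condition $(2.1)$ of the paper, which is carried around as a nontrivial additional hypothesis throughout Section~4; building the proof on it is circular.

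Beyond that, you are aiming at convexity, which is strictly stronger than the stated conclusion (isometric embedding only requires that \emph{some} $X$-geodesic between two vertices of $\mathrm{Min}_X(h)$ lie in $\mathrm{Min}_X(h)$), and it is not what Elsener proves. The step where you would need to show that every vertex of the strip has displacement exactly $|h|$ is flagged in your own write-up as "the main obstacle" and left as a sketch --- but that is where the entire content of the theorem lives. The actual argument (Elsener, Proposition~3.3, mirrored by the paper's proof of Theorem \ref{3.2} in the $8$-located setting) is quite different: one assumes isometric embedding fails, picks $v,w\in\mathrm{Min}_X(h)$ at minimal distance with no geodesic between them inside $\mathrm{Min}_X(h)$, forms the closed loop $\gamma\star\beta\star h(\gamma)\star\alpha$ from geodesics $v\,$--$\,w$, $w\,$--$\,h(w)$, $h(w)\,$--$\,h(v)$, $h(v)\,$--$\,v$, and derives a contradiction from the local curvature condition (minimal disc diagrams in the systolic case; the $SD'$ conditions and $8$-location here). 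Neither axes nor convexity are needed. For the systolicity part, local $6$-largeness does follow from fullness as you say, but simple connectivity is deduced from the isometric embedding by a separate argument (Elsener, Proposition~3.4), not from convexity.
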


Let $h$ be an isometry of a simplicial complex $X$.
An $h$-invariant geodesic in $X$ is called an \emph{axis} of $h$. We say that $\rm{Min}_{X}(h)$ is
the union of axes, if for every vertex $x \in \rm{Min}_{X}(h)$, there is an $h$-invariant geodesic
passing through $x$, i.e. $\rm{Min}_{X}(h)$ can be written as follows:
\begin{center}
$\rm{Min}_{X}(h) = \rm{span}\{ \bigcup \gamma | \gamma$ is an $h$-invariant geodesic $\}$ ($2.1$)
\end{center}
In this case, the isometry $h$ acts on $X$ as a translation along the axes by the
number $|h|$.

For two subcomplexes $X_{1}, X_{2} \subset X$, the distance $d_{\min}(X_{1}, X_{2})$ is defined to be
\begin{center}
    $d_{\min}(X_{1}, X_{2}) = \min \{ d_{X}(x_{1},x_{2}) | x_{1} \in X_{1}, x_{2} \in X_{2} \}$.
\end{center}

Next we define
the \emph{graph of axes} denoted by $Y_{h}$.
For a hyperbolic isometry $h$ satisfying $(2.1)$,
we define the simplicial graph $Y_{h}$ as follows:
\begin{center}
    $Y_{h}^{(0)} = \{ \gamma | \gamma$ is an $h$-invariant geodesic in $\rm{Min}_{X}(h) \}$, \\
    
     $Y_{h}^{(1)} = \{ \{\gamma_{1},\gamma_{2}\} | d_{\min}(\gamma_{1}, \gamma_{2}) \leq 1 \}$.
\end{center}
Let $d_{Y(h)}$ denote the associated metric on $Y_{h}^{(0)}$.

\subsection{Hyperbolicity}

One of the paper's main results relies on the following theorem.

\begin{theorem}\label{2.4}
Let $X$ be an $8$-located simplicial complex which satisfies the $SD'$ property. Then the $0$-skeleton of $X$ with a path metric induced from $X^{(1)}$, is $\delta$-hyperbolic, for a
universal constant $\delta$  (see \cite{L-8loc}, Theorem $3.7$).
\end{theorem}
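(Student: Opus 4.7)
The overall plan is to establish a linear isoperimetric inequality for the $1$-skeleton of $X$, from which Gromov hyperbolicity follows. The two hypotheses play complementary roles: Proposition \ref{2.1a} ensures that $X$ is simply connected so that every loop bounds a combinatorial disc, the $SD'$ property organizes balls into controllable concentric layers around any basepoint, and $8$-location lets us straighten problematic short loops encountered while filling. I would first fix a vertex $O$ and exploit $SD'(O)$ to define, for each vertex $v\in S_{i+1}(O)$, a nonempty ``residue'' $R(v)=X_{v}\cap B_{i}(O)$, which by condition $(V)$ is connected up to a single intermediate neighbor, and by condition $(T)$ behaves well on edges within a sphere. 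Iterating these residues produces a projection $\pi\colon X^{(0)}\setminus\{O\}\to X^{(0)}$ that decreases the distance to $O$ by exactly one at each step.

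Next I would show a \emph{geodesic convergence} lemma: if $\alpha,\beta$ are two $1$-skeleton geodesics from $O$ whose endpoints in $S_{i}(O)$ are at distance at most one, then $\alpha$ and $\beta$ stay at uniformly bounded distance throughout. The argument proceeds by downward induction on $i$, examining the loop formed by the last edges of $\alpha$ and $\beta$ together with the projected segments. If that loop has length at most $8$ it is homotopically trivial (by simple connectivity) and full up to tightening, so by Definition~\ref{def-2.2} it fits inside a $1$-ball, immediately collapsing the two geodesics together. If it is longer, one subdivides it using $SD'$ residues into sub-loops of length at most $8$, each handled by the previous case. Iterating this procedure controls the Hausdorff distance between the two geodesics.

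With geodesic convergence in hand, I would deduce the thin triangles condition in the standard way. Given a geodesic triangle $[x,y]\cup[y,z]\cup[z,x]$, pick $O:=x$ and apply geodesic convergence to the two sides $[x,y]$ and $[x,z]$ starting at $O$; every time these geodesics diverge, a $B_{1}$-filling of a short loop bridges them, so each point of $[x,z]$ lies within a constant of $[x,y]\cup[y,z]$. The same argument with basepoints $y$ and $z$ gives $\delta$-thinness of all three sides for a \emph{universal} constant, independent of $X$, because the only numerical inputs are the thresholds $8$ (from location) and the bounded combinatorics of residues from $SD'$.

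The main obstacle is the second step, controlling the geodesic convergence in the presence of full $4$-cycles. Unlike the $7$-systolic setting, where locally $6$-largeness rules out squares and makes projections $1$-Lipschitz essentially by a link computation, here $8$-location only yields fillings with \emph{one} interior vertex for loops up to length $8$, so the inductive loop-shortening must be done carefully: after tightening, one must rule out the possibility that the loop has length in the narrow range where neither $8$-location nor a direct $SD'$ subdivision applies cleanly. Resolving this case analysis, via a double use of condition $(V)$ to produce a ``diagonal'' vertex inside a square and then applying $8$-location to the two resulting hexagons, is the technical heart of the argument.
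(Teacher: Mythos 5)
The paper does not actually prove this statement: Theorem \ref{2.4} is imported verbatim from \cite{L-8loc} (Theorem $3.7$), and the only related material reproduced here is the pair of auxiliary Lemmas \ref{2.5} and \ref{2.6}, which are of the same flavour as your ``residue/projection'' step (adjacency of vertices produced by conditions (T) and (V) one sphere closer to the basepoint). So your overall strategy --- use $SD'$ to build controlled projections toward a basepoint, use $8$-location to collapse short full loops, and conclude via a thinness condition --- is in the right spirit, but there is no in-paper argument to measure it against, and as written your proposal is a plan rather than a proof.

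Two concrete gaps. First, the passage from your ``geodesic convergence'' lemma to thin triangles is not ``the standard way.'' What you prove (two geodesics from $O$ with endpoints at distance at most one stay uniformly close) is a thin-bigon statement; for graphs, uniformly thin bigons do imply hyperbolicity, but that is a genuine theorem (Papasoglu), not a routine deduction, and the way you invoke it --- applying the convergence lemma to the sides $[x,y]$ and $[x,z]$ whose far endpoints $y$ and $z$ may be arbitrarily far apart --- violates the hypothesis of your own lemma. You must either cite the thin-bigon criterion explicitly or run an actual divergence/interpolation argument along $[y,z]$. Second, the inductive core of the convergence lemma --- subdividing a long loop into full sub-loops of length at most $8$ using $SD'$ residues, and handling the full $4$-cycles that $8$-location permits --- is exactly where Lemmas \ref{2.5} and \ref{2.6} (and their proofs in \cite{L-8loc}) do the real work, and you explicitly defer it as ``the technical heart.'' Until that case analysis is carried out, and until the opening promise of a linear isoperimetric inequality is either fulfilled or dropped in favour of the thinness route you actually pursue, the argument is incomplete.
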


We shall apply the following lemmas frequently.

\begin{lemma}\label{2.5}
Let $X$ be an $8$-located simplicial complex which satisfies the $SD'_{n}(O)$ property for some vertex $O$, $n \geq 2$. Let $v \in S_{n+1}(O)$ and let $y,z \in B_{n}(O)$ be such that $v \sim y,z$
and $d(y,z) = 2$.
Let $w \in B_{n}(O)$ be a vertex such that $w \sim y, v, z$, given by the vertex condition (V). We consider the vertices $u_{1}, u_{2} \in B_{n-1}(O)$ such that
$u_{1} \sim  y, w$ and $u_{2} \sim  w, z$, given by the triangle condition (T). If $u_{1} \nsim z$ and $u_{2} \nsim y$, then
$u_{1} \sim u_{2}$ (see \cite{L-8loc}, Lemma $3.1$). 
\end{lemma}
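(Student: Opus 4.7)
The plan is to argue by contradiction. Assume $u_1 \nsim u_2$ and derive a contradiction via $8$-location.

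First, I collect distance information. Since the neighbors $y, z, w$ of $v \in S_{n+1}(O)$ all lie in $B_n(O)$, they belong to $S_n(O)$; and since $u_1, u_2 \in B_{n-1}(O)$ are neighbors of vertices in $S_n(O)$, they belong to $S_{n-1}(O)$. In particular, no vertex of $B_{n-1}(O)$ is adjacent to $v$, a non-adjacency I will use repeatedly. I then invoke the vertex condition (V) at $w \in S_n(O)$ applied to the pair $u_1, u_2 \in X_w \cap B_{n-1}(O)$ to produce a vertex $t \in S_{n-1}(O)$ with $t \sim u_1$, $t \sim u_2$, and $t \sim w$.

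Next, I consider the hexagonal loop $\gamma = (u_1, y, v, z, u_2, t)$. Using the standing hypotheses $u_1 \nsim z$ and $u_2 \nsim y$, the working assumption $u_1 \nsim u_2$, the equality $d(y,z)=2$, and the distance-based non-adjacencies above, a direct check shows that the only potential chords of $\gamma$ are $\{t,y\}$ and $\{t,z\}$. I split into cases on whether either chord is present.

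In the principal case $t \nsim y$ and $t \nsim z$, the loop $\gamma$ is a full loop of length $6$, so $8$-location places its vertices in a common ball $B_1(x)$; a distance check puts $x \in S_n(O)$. Both $x$ and $t$ are then adjacent to $u_1$ and $u_2$. Forming the auxiliary $4$-cycle $(u_1, x, u_2, t)$ and invoking $8$-location again (directly if $x \nsim t$, or after first using the triangle condition (T) on the edge $\{x,w\} \subset S_n(O)$ when $x \sim w$) produces a vertex forced to coincide with $u_1$ or $u_2$, contradicting $u_1 \nsim u_2$. The remaining subcases---exactly one of $\{t,y\}$, $\{t,z\}$ is a chord, or both are---are handled by shortening $\gamma$ to a full pentagonal sub-loop such as $(t, y, v, z, u_2)$ or by re-applying (V) at $t$; the same contradiction strategy then applies.

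The principal obstacle I anticipate is the closure of the contradiction: $8$-location only places loop vertices in a common $1$-ball and does not by itself create new edges between them, so I must iterate $8$-location on auxiliary full $4$-cycles and combine it with the flagness of $X$ and one further application of (T) at the edge $\{x,w\} \subset S_n(O)$ to funnel the argument down to the missing adjacency $u_1 \sim u_2$. Verifying fullness of the auxiliary cycles in each subcase---and ruling out degenerate coincidences of the new vertex with $x, w$, or $t$---will be the most delicate part of the proof.
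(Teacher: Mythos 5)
A preliminary remark: this paper does not actually prove Lemma \ref{2.5}; it imports it from \cite{L-8loc} (Lemma 3.1), so there is no in-paper argument to compare against line by line. Judged on its own terms, your setup is sound: the level bookkeeping ($y,w,z\in S_n(O)$, $u_1,u_2,t\in S_{n-1}(O)$, no vertex of $B_{n-1}(O)$ adjacent to $v$), the use of (V) at $w$ to produce $t$, and the verification that the hexagon $(u_1,y,v,z,u_2,t)$ can only carry the chords $\{t,y\}$ and $\{t,z\}$ are all correct, and this is the natural opening move. The problem is the endgame, and it is a genuine gap, not a detail. First, the auxiliary $4$-cycle $(u_1,x,u_2,t)$ is never full: $x$ is the centre of a $1$-ball containing the whole hexagon, and $t$ is a vertex of that hexagon, so $x\sim t$ (or $x=t$) automatically; your ``directly if $x\nsim t$'' branch is vacuous. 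Second, in the remaining branch nothing forces the vertex produced by $8$-location (or by (T) applied to an edge $\{x,w\}$, whose existence you have not even established) to coincide with $u_1$ or $u_2$: $8$-location only supplies a common neighbour of the cycle's vertices, and iterating it on squares merely manufactures more and more common neighbours of $u_1$ and $u_2$ without ever creating the edge $u_1u_2$. Third, no distance contradiction can arise from your hexagon: every antipodal pair ($v$ and $t$, $y$ and $u_2$, $u_1$ and $z$) lies at levels differing by at most $2$, hence at distance exactly $2$, which is perfectly consistent with sitting in a common $1$-ball.

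The missing idea is to exploit the hypothesis $n\geq 2$ and descend one more level: apply the triangle condition (T) to edges of $S_{n-1}(O)$ such as $\langle u_1,t\rangle$ and $\langle t,u_2\rangle$ to obtain vertices in $B_{n-2}(O)$, and build a full cycle of length at most $8$ that contains both $v\in S_{n+1}(O)$ and a vertex of $B_{n-2}(O)$. Only then does $8$-location bite: the common neighbour forces those two vertices to be at distance at most $2$, while the level gap of $3$ forces distance at least $3$. This ``$2$ versus $3$'' clash is exactly the contradiction pattern used repeatedly in the proof of Theorem \ref{3.2} in this paper, and it is the mechanism your proposal never reaches.
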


\begin{lemma}\label{2.6}
Let $X$ be an $8$-located simplicial complex which satisfies the $SD'_{n}(O)$ property for some vertex $O$, $n \geq 2$. Let $v_{1}, v_{2}, v_{3} \in B_{n-1}(O)$ be such that $v_{1} \sim v_{2} \sim v_{3}$. Let $w_{1},w_{2} \in B_{n-2}(O)$ be such that $w_{1} \sim v_{1},v_{2}$ and
$w_{2} \sim v_{2},v_{3}$, given by the triangle condition (T).
Let $p_{1},p_{2} \in B_{n}(O)$ be such that $p_{1} \sim v_{1},v_{2}$ and
$p_{2} \sim v_{2},v_{3}$, given by the triangle condition (T).
Then $w_{1} \sim w_{2}$ if and only if
$p_{1} \sim p_{2}$  (see \cite{L-8loc}, Lemma $3.2$).
\end{lemma}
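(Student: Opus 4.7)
The plan is to prove both directions of the biconditional by constructing a short loop around $v_{2}$ and invoking $8$-location together with the $SD'$ conditions, using Lemma~\ref{2.5} in some of the chord subcases. The two directions are formally symmetric, exchanging the roles of the inner-ball pair $(w_{1},w_{2})$ and the outer-ball pair $(p_{1},p_{2})$.

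For the direction $(\Rightarrow)$, assume $w_{1}\sim w_{2}$ and, for contradiction, that $p_{1}\nsim p_{2}$. Since $v_{2}$ is adjacent to each of $p_{1},v_{1},w_{1},w_{2},v_{3},p_{2}$, the five given adjacencies together with $w_{1}\sim w_{2}$ assemble into a $6$-cycle $\gamma=(p_{1},v_{1},w_{1},w_{2},v_{3},p_{2})$ once we close it with the (hypothetically absent) edge $p_{2}\sim p_{1}$. Because $p_{i}\in S_{n}(O)$ and $w_{j}\in S_{n-2}(O)$, the combinatorial distance bound rules out every chord of $\gamma$ that joins a $p$-vertex to a $w$-vertex. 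The remaining admissible chords reduce to the short list $v_{1}\sim v_{3}$, $v_{1}\sim w_{2}$, $v_{1}\sim p_{2}$, $w_{1}\sim v_{3}$, and $p_{1}\sim v_{3}$.

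If none of these chords is present, $\gamma$ is a full, null-homotopic $6$-loop (null-homotopic by Proposition~\ref{2.1a}); by Definition~\ref{def-2.2}, $\gamma$ lies in a ball $B_{1}(c)$ for some vertex $c$. The adjacencies $c\sim p_{i}$ and $c\sim w_{j}$ pin $c$ to $S_{n-1}(O)$, and combining $c$ with the given adjacency $v_{2}\sim p_{1},p_{2}$ and applying $(V)$ produces the edge $p_{1}\sim p_{2}$, the desired contradiction. Each remaining chord is treated separately: for instance, $v_{1}\sim w_{2}$ places the configuration in the exact setup of Lemma~\ref{2.5} (with $v=v_{2}$, $y=v_{1}$, $z=v_{3}$ and $w$ the vertex produced by $(V)$), while $v_{1}\sim v_{3}$ produces the triangle $\langle v_{1},v_{2},v_{3}\rangle$ by flagness and then reduces to $(T),(V)$; the remaining chord cases reduce analogously.

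The direction $(\Leftarrow)$ is symmetric: one starts from the $6$-cycle $(w_{1},v_{1},p_{1},p_{2},v_{3},w_{2})$, performs the same chord analysis with the roles of the inner and outer balls exchanged, and uses $(T)$, $(V)$, $8$-location, and Lemma~\ref{2.5} to produce $w_{1}\sim w_{2}$. The main obstacle is the chord case analysis and, in the chord-free case, translating the $8$-location conclusion ``$\gamma\subset B_{1}(c)$'' into the concrete adjacency via $(V)$; the crucial enabling feature is the two-level separation between the $p$-vertices and the $w$-vertices, which forbids the problematic chords and pins the level of the $8$-location center.
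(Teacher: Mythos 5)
The paper does not actually prove this lemma: it is quoted from \cite{L-8loc} (Lemma $3.2$), so there is no internal proof to compare against, and your proposal must stand on its own. It does not, because of a structural flaw at its core. In the direction $(\Rightarrow)$ you assume $p_{1}\nsim p_{2}$ and then form the ``$6$-cycle'' $\gamma=(p_{1},v_{1},w_{1},w_{2},v_{3},p_{2})$, closing it ``with the (hypothetically absent) edge $p_{2}\sim p_{1}$.'' A cycle in this paper is a subcomplex isomorphic to a triangulation of $S^{1}$: every one of its edges must exist in $X$. Under your standing assumption, $\gamma$ is a path, not a loop, so Definition~\ref{def-2.2} ($8$-location) does not apply to it, and Proposition~\ref{2.1a} cannot make a non-loop null-homotopic. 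Closing the path through $v_{2}$ instead does give a genuine $7$-loop, but $v_{2}$ is adjacent to every other vertex of that loop, so its tightening collapses to a triangle and $8$-location again yields no information. The same defect recurs in your direction $(\Leftarrow)$.

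The endgame is also unjustified: even granting a vertex $c$ with $\gamma\subset B_{1}(c)$ and $c\in S_{n-1}(O)$, condition $(V)$ only ever produces a \emph{new} vertex $t$ adjacent to two given ones; it cannot certify that the two given vertices $p_{1},p_{2}$ are themselves adjacent, and flagness does not force $p_{1}\sim p_{2}$ merely because both are adjacent to $c$ and to $v_{2}$. Finally, the claimed ``formal symmetry'' between the two directions is not available: the $SD'$ conditions are intrinsically one-sided (condition $(T)$ only descends toward $O$, and $(V)$ only speaks about the sphere one level further out), which is exactly why the equivalence $w_{1}\sim w_{2}\iff p_{1}\sim p_{2}$ needs a real argument. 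A workable proof has to manufacture a genuine full loop of length at most $8$ out of auxiliary vertices supplied by $(T)$ and $(V)$ at the appropriate levels --- in the style of the repeated constructions in the proof of Theorem~\ref{3.2} --- and derive a distance contradiction from the resulting $1$-ball, rather than building the loop out of the very edge whose existence is at issue.
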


\subsection{Classifying spaces with finite or virtually cyclic stabilisers.} 
The main
goal of this section is, given a group $G$, to describe a method of constructing a
model for a classifying space
with virtually cyclic stabilisers out of a model for
a classifying space
with finite stabilisers. The presented method is
due to W. L\"uck and M. Weiermann (\cite{LW}). First we give the necessary definitions. 

A collection of subgroups $\mathcal{F}$ of a group $G$ is called a \emph{family} if it is closed under
taking subgroups and conjugation by elements of $G$. Two examples which will be of interest to us are the family $\mathcal{FIN}$ of all finite subgroups, and the family $\mathcal{VCY}$ of all
virtually cyclic subgroups.

\begin{definition}
Given a group $G$ and a family of its subgroups $\mathcal{F}$, a \emph{model for the classifying space} $E_{\mathcal{F}}G$ is a $G$-$CW$-complex $X$ such that for any subgroup $H \subset G$
the fixed point set $X^{H}$ is contractible if $H \in \mathcal{F}$, and empty otherwise.
\end{definition}

Let $\underline{E}G$ denote $E_{\mathcal{FIN}}G$
and let $\underline{\underline{E}}G$ denote $E_{\mathcal{VCY}}G$. 

A model for $E_{\mathcal{F}}G$ exists for any group and any family. Any two
models for $E_{\mathcal{F}}G$ are $G$-homotopy equivalent (see \cite{L}).
However, general constructions always produce infinite dimensional models.

We will describe a method of constructing a finite dimensional model for $\underline{\underline{E}}G$
out of a model for $\underline{E}G$ and appropriate models associated to infinite virtually cyclic subgroups of $G$. If $H \subset G$ is a subgroup and $\mathcal{F}$ is a family of subgroups of $G$, let $\mathcal{F} \cap H$
denote the family of all subgroups of $H$ which belong to the family $\mathcal{F}$. More
generally, if $\phi : H \rightarrow G$ is a homomorphism, let
$\phi^{\star}\mathcal{F}$ denote the smallest family
of subgroups of $H$ that contains 
$\phi^{-1}(F)$ for all $F \in \mathcal{F}$.

Consider the collection $\mathcal{VCY} \setminus \mathcal{FIN}$ of infinite virtually cyclic subgroups of $G$. It
is not a family since it does not contain the trivial subgroup. Define an equivalence
relation on $\mathcal{VCY} \setminus \mathcal{FIN}$ by 
\begin{center}
$H_{1} \sim H_{2}$ $\iff |H_{1} \cap H_{2}| = \infty$
\end{center}
Let $[H]$ denote the equivalence class of $H$, and let $[\mathcal{VCY} \subset \mathcal{FIN}]$ denote the set of
equivalence classes. The group $G$ acts on $[\mathcal{VCY} \subset \mathcal{FIN}]$ by conjugation, and for a
class $[H] \in  [\mathcal{VCY} \subset \mathcal{FIN}]$ define the subgroup $N_{G}(H) \subseteq G$ to be the stabiliser of
$[H]$ under this action, i.e. \begin{center} $N_{G}(H) = \{ g \in G | $ $ |g^{-1}Hg \cap H| = \infty \}$ \end{center}
The subgroup $N_{G}(H)$ is called the \emph{commensurator} of $H$, since its elements
conjugate $H$ to the subgroup commensurable with $H$. For $[H] \in [\mathcal{VCY} \subset \mathcal{FIN}]$
define the family $\mathcal{G}[H]$ of subgroups of $N_{G}(H)$ as follows
\begin{center}
    $\mathcal{G}[H] = \{ K \subset G | $ $ K \in [\mathcal{VCY} \subset \mathcal{FIN}], [K] = [H] \} \cup \{ K \in \mathcal{FIN} \cap N_{G}[H] \}$.
\end{center}

\begin{definition}
A group G satisfies \emph{condition (C)} if for every
$g,h \in G$ with $|h| = \infty$ (infinite order) and any $k,l \in \mathbf{Z}$ we have \begin{center} $g h^{k} g^{-1} = h^{l} \implies |k| = |l|$ \end{center}
\end{definition}

\begin{lemma}\label{2.1}
Let $K \subset N_{G}[H]$ be a finitely generated subgroup that contains some
representative of $[H]$ and assume that the group $G$ satisfies condition (C). Choose
an element $h \in H$ such that $[\langle h \rangle] = [H]$ (any element of infinite order has this
property). Then there exists $k \geq 1$, such that $\langle h^{k} \rangle$ is normal in $K$.
\end{lemma}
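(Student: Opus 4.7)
The plan is to exploit commensurability to reduce the problem to a statement about $\langle h\rangle$ alone, apply condition (C) to the resulting conjugation relations, and then use finite generation to obtain a uniform exponent.

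First I would fix $g\in K$ and show that $g^{-1}\langle h\rangle g$ and $\langle h\rangle$ have infinite intersection. Since $K\subset N_G[H]$, the subgroup $g^{-1}Hg$ is commensurable with $H$, so $[g^{-1}Hg]=[H]$. As $[\langle h\rangle]=[H]$ by the choice of $h$, transitivity of the commensurability relation on infinite virtually cyclic subgroups gives $[g^{-1}\langle h\rangle g]=[\langle h\rangle]$, i.e.\ $|g^{-1}\langle h\rangle g\cap\langle h\rangle|=\infty$. Both groups are infinite cyclic, so their intersection is generated by some $h^{k(g)}$ with $k(g)\geq 1$, and this element also lies in $g^{-1}\langle h\rangle g$, yielding an equation
\[
g\,h^{k(g)}\,g^{-1}=h^{m(g)}
\]
for some nonzero integer $m(g)$.

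Next I would invoke condition (C) on this relation (with the element of infinite order being $h$), which forces $|k(g)|=|m(g)|$, so $g h^{k(g)} g^{-1}=h^{\pm k(g)}$. In particular, conjugation by $g$ maps $\langle h^{k(g)}\rangle$ to itself. The main obstacle is now to produce a single exponent that works for all of $K$, and this is exactly where finite generation enters: pick a finite generating set $g_1,\ldots,g_r$ of $K$, obtain integers $k(g_1),\ldots,k(g_r)\geq 1$ as above, and set
\[
k=\operatorname{lcm}\bigl(k(g_1),\ldots,k(g_r)\bigr).
\]
For each $i$, since $k$ is a multiple of $k(g_i)$, the relation $g_i h^{k(g_i)} g_i^{-1}=h^{\pm k(g_i)}$ implies $g_i h^{k} g_i^{-1}=h^{\pm k}\in\langle h^k\rangle$.

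Finally, any element of $K$ is a word in the $g_i^{\pm 1}$, and each such letter conjugates $h^k$ into $\langle h^k\rangle$; by induction on word length, conjugation by any $g\in K$ sends $h^k$ to an element of $\langle h^k\rangle$. Since $\langle h^k\rangle$ is generated by $h^k$, this proves $g\langle h^k\rangle g^{-1}=\langle h^k\rangle$ for every $g\in K$, so $\langle h^k\rangle\trianglelefteq K$, as required. The only delicate step is the commensurability transitivity at the start, which is why one works with the subclass of infinite virtually cyclic subgroups where the equivalence relation in the excerpt coincides with the classical commensurability relation.
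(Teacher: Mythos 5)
Your proof is correct, and it is essentially the argument the paper defers to (it cites \cite{OP}, Lemma 2.6, rather than proving the statement itself): use commensurability and conjugation-equivariance of $\sim$ to get $g h^{k(g)} g^{-1}=h^{m(g)}$ for each generator, apply condition (C) to force $|k(g)|=|m(g)|$, and take the least common multiple over a finite generating set. The transitivity of $\sim$ on infinite virtually cyclic subgroups, which you flag as the delicate point, does hold because an infinite subgroup of an infinite virtually cyclic group has finite index, so no gap remains.
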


For the proof see \cite{OP}, Lemma $2.6$, page $8$.

\section{Minimal displacement set for $8$-located complexes with the SD'-property}

We study the structure of the minimal displacement set in an $8$-located complex with the SD'-property. The notations introduced in section $2.2$ hold in this section as well.

\begin{lemma}\label{3.1}
Let $h$ be a simplicial isometry without fixed points of a simplicial complex $X$. We choose
a vertex $v \in \rm{Min}_{X}(h)$ and a geodesic $\alpha \subset X^{(1)}$ joining $v$ with $h(v)$. Consider a simplicial path
$\gamma : \mathbf{R} \rightarrow X$ (where $\mathbf{R}$ is given a simplicial structure with $\mathbf{Z}$ as the set of vertices) being the
concatenation of geodesics $h^{n}(\alpha), n \in \mathbf{Z}$.
Then $\gamma$ is a $|h|$-geodesic (i.e. $d(\gamma(a), \gamma(b)) = |a-b|$
if $a, b$ are such integers that $|a - b| \leq |h|$). In particular, $\rm{Im}(\gamma) \subset \rm{Min}_{X}(h)$.
\end{lemma}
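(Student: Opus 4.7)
The plan is to use the key identity $h(\gamma(t)) = \gamma(t+|h|)$, which is forced by the construction of $\gamma$ as the concatenation of $h^n(\alpha)$ with the natural parametrization where $\gamma(n|h|)=h^n(v)$. The starting observation is that $\alpha$ has length exactly $|h|$, because $v \in \mathrm{Min}_X(h)$ means $d(v,h(v))=|h|$, and each translate $h^n(\alpha)$ is a geodesic of the same length since $h$ is a simplicial isometry. So $\gamma$ restricted to any single segment $[n|h|,(n+1)|h|]$ is already a geodesic.

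The crucial step is to upgrade this to ``geodesic on any window of length $|h|$.'' Fix an integer $a$ and consider the subpath of $\gamma$ running from $\gamma(a)$ to $\gamma(a+|h|)=h(\gamma(a))$. Counting edges on the two pieces that it traverses (a tail of $h^n(\alpha)$ of length $(n+1)|h|-a$ and a head of $h^{n+1}(\alpha)$ of length $a-n|h|$, where $n|h|\le a<(n+1)|h|$), the total combinatorial length is exactly $|h|$. On the other hand, by definition of the translation length we have $d(\gamma(a),h(\gamma(a)))\ge |h|$. Combining these, the subpath must be a geodesic from $\gamma(a)$ to $h(\gamma(a))$, and in particular $\gamma(a)\in \mathrm{Min}_X(h)$, which already gives the ``in particular'' clause of the statement.

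Once this is in hand, the $|h|$-geodesic property falls out immediately. Given integers $a<b$ with $b-a\le |h|$, both $\gamma(a)$ and $\gamma(b)$ lie on the subpath of $\gamma$ from $\gamma(a)$ to $\gamma(a+|h|)$, which we just showed is a geodesic. Any subpath of a geodesic is a geodesic, so $d(\gamma(a),\gamma(b))=b-a$, as required.

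The argument is essentially formal manipulation from the definitions, so I do not expect a genuine obstacle; the only place where one must be careful is the bookkeeping of the parametrization, namely verifying that $h$ really acts on $\gamma$ as a shift by $|h|$ and that the two partial segments of $h^n(\alpha)$ and $h^{n+1}(\alpha)$ fit together to give a path of combinatorial length $|h|$ for \emph{every} starting point $\gamma(a)$, not only for $a$ a multiple of $|h|$.
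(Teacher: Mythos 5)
Your proposal is correct and follows essentially the same route as the paper's proof: the lower bound $d(\gamma(a),\gamma(a+|h|))\ge |h|$ comes from the definition of the translation length via $\gamma(a+|h|)=h(\gamma(a))$, the upper bound comes from the subpath being a simplicial path with $|h|$ edges, and the case $|a-b|<|h|$ reduces to this window case. Your write-up is merely more explicit about the edge-counting and the parametrization bookkeeping than the paper's terse version.
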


\begin{proof}
 The proof is similar to the one given in \cite{E2}, Fact $3.2$.
We prove the statement for $|a-b| = |h|$ (this implies the general case). Then, by the
construction of $\gamma$, either $\gamma(b) = h(\gamma(a))$ or $\gamma(a) = h(\gamma(b))$. Thus we have $d(\gamma(a), \gamma(b)) \geq |h|$.
The opposite inequality follows from the fact that $\gamma$ is a simplicial map.
\end{proof}

Next we prove one of the paper's main results.

\begin{theorem}\label{3.2}
Let $h$ be a (simplicial) isometry with no fixed points of an $8$-located complex $X$ with the SD'-property. Assume $|h| > 3$. Then
the $1$-skeleton of $\rm{Min}_{X}(h)$ is isometrically embedded into $X$.
\end{theorem}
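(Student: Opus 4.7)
The plan is to prove the stronger claim that for any vertices $v, w \in \rm{Min}_{X}(h)^{(0)}$, every vertex of every $X^{(1)}$-geodesic $\beta = (v = u_0, u_1, \ldots, u_n = w)$ lies in $\rm{Min}_{X}(h)$. Since $\rm{Min}_{X}(h)$ is a full subcomplex of $X$, this yields the isometric embedding of $\rm{Min}_{X}(h)^{(1)}$ into $X^{(1)}$. The key reduction is that to show $u_i \in \rm{Min}_{X}(h)$ it suffices to exhibit a path from $u_i$ to $h(u_i)$ of length at most $|h|$, since $|h|$ is the infimum of the displacement function on $X^{(0)}$.

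First I would fix a geodesic $\alpha_v$ in $X^{(1)}$ from $v$ to $h(v)$, which has length $|h|$, and proceed by induction on $i$, producing at each step a geodesic $\alpha_i$ from $u_i$ to $h(u_i)$ of length $|h|$, with $\alpha_0 = \alpha_v$. For the inductive step, the edge $u_{i-1} u_i$ of $\beta$ and its $h$-image $h(u_{i-1}) h(u_i)$ sit alongside $\alpha_{i-1}$ and together bound a thin combinatorial strip. Using the triangle condition (T) and the vertex condition (V) of the SD'-property (Definition \ref{def-2.3}) together with Lemmas \ref{2.5} and \ref{2.6}, one pushes the intermediate vertices of $\alpha_{i-1}$ across this strip to produce a candidate path ending at $h(u_i)$. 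Short full cycles of length at most eight that arise during the push are resolved by $8$-location (Definition \ref{def-2.2}), which confines them to $1$-balls.

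The principal obstacle is length control. After tightening (Definition \ref{def-2.4}) the pushed path to a full path $\alpha_i$, I need to verify that $|\alpha_i| = |h|$: shorter would contradict the minimality of $|h|$, and longer would fail to place $u_i$ in $\rm{Min}_{X}(h)$. The hypothesis $|h| > 3$ is expected to enter exactly here, ruling out small configurations in which a forced detour on one side of the strip has no compensating shortcut on the other, and keeping the auxiliary cycles to be filled within the regime where $8$-location is effective. Gromov hyperbolicity (Theorem \ref{2.4}) ensures that the long strip bounded by $\beta$, $h(\beta)$, $\alpha_v$, and the corresponding geodesic at $w$ is globally thin, so the local SD'-analysis can be applied uniformly along $\beta$.

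Once $\alpha_i$ has been constructed with $|\alpha_i| = |h|$, applying Lemma \ref{3.1} to the vertex $u_i$ and the geodesic $\alpha_i$ produces an $h$-invariant bi-infinite simplicial geodesic line through $u_i$ whose vertices all realise the translation length $|h|$. In particular $u_i \in \rm{Min}_{X}(h)$, and running the induction to $i = n$ places every vertex of $\beta$ in $\rm{Min}_{X}(h)$, yielding the desired isometric embedding of $\rm{Min}_{X}(h)^{(1)}$ into $X^{(1)}$.
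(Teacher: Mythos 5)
Your overall strategy --- show that every vertex $u_i$ on a geodesic between two vertices of $\rm{Min}_{X}(h)$ satisfies $d(u_i,h(u_i))=|h|$ --- aims at the right target, but the inductive step you propose cannot work as stated, and this is a genuine gap rather than a detail to be filled in. At step $i$ the only hypotheses your induction makes available are that $u_{i-1}$ admits a geodesic $\alpha_{i-1}$ of length $|h|$ to $h(u_{i-1})$ and that $u_i\sim u_{i-1}$. From these data alone one can only conclude $d(u_i,h(u_i))\le |h|+2$, and no amount of local pushing with (T), (V), Lemmas \ref{2.5}--\ref{2.6} and $8$-location can improve this: if it could, then every vertex adjacent to a vertex of $\rm{Min}_{X}(h)$ would lie in $\rm{Min}_{X}(h)$, and by connectedness $\rm{Min}_{X}(h)$ would be all of $X$, which is absurd. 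The indispensable global input is that the geodesic terminates at $w\in\rm{Min}_{X}(h)$, so that the displacement, which can drift upward along the geodesic, is forced back down to $|h|$ at the far end; you mention the quadrilateral bounded by the geodesic, its $h$-image and the two translation geodesics only as a thinness remark, but the condition at $w$ never actually enters your inductive step, so the length control $|\alpha_i|=|h|$ --- which you yourself identify as the principal obstacle --- is simply asserted, not proved.

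The paper's proof is organized precisely around this tension. It takes a minimal counterexample (vertices $v,w\in\rm{Min}_{X}(h)$ at minimal distance with no geodesic between them inside $\rm{Min}_{X}(h)$), forms the cycle $\gamma\star\beta\star h(\gamma)\star\alpha$, and locates the adjacent transition vertices $y,x$ on $\gamma$ where geodesics to their $h$-images switch from running around the left of this cycle to running around the right. At that transition it builds, via (T), (V) and Lemmas \ref{2.5} and \ref{2.6}, a full cycle of length at most $8$, which $8$-location forces into a $1$-ball; this contradicts the ball-membership estimates of its vertices. To salvage your induction you would need to carry along, as part of the inductive hypothesis, quantitative information about the distance from $u_i$ to $h(u_i)$ measured around both sides of the quadrilateral, which in effect reconstructs the paper's choice of $x$ and $y$ and its case analysis. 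A minor further point: once you have a path of length $|h|$ from $u_i$ to $h(u_i)$ you already have $u_i\in\rm{Min}_{X}(h)$ by minimality of $|h|$, so the concluding appeal to Lemma \ref{3.1} is superfluous.
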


\begin{proof}

The construction is similar to the one given in \cite{E2}, Proposition $3.3$ for systolic complexes.

Suppose the $1$-skeleton of $\rm{Min}_{X}(h)$ is not isometrically embedded. Then there exist
vertices $v, w \in \rm{Min}_{X}(h)$ such that no geodesic in $X$ with endpoints $v$ and $w$ is contained in
$\rm{Min}_{X}(h)$. Choose $v$ and $w$ so that $d(v,w)$ minimal (clearly $d(v,w) > 1$). Join $v$ with
$h(v)$, $w$ with $h(w)$ and $v$ with $w$ by geodesics $\alpha, \beta$ and $\gamma$, respectively. Then $h(v)$ is joined
with $h(w)$ by $h(\gamma)$. Note that $l(\alpha) = l(\beta) = |h|$, $l(\gamma) = l(h(\gamma)) > 1$.

According to Lemma \ref{3.1}, we have
$\alpha, \beta \subset \rm{Min}_{X}(h)$.
Then, by minimality of $d(v, w)$, geodesics $\alpha$ and $\gamma$ intersect only at the endpoints. The same holds for the geodesics $\alpha$ and $h(\gamma)$, $\beta$ and $\gamma$, $\beta$ and $h(\gamma)$, respectively.
Suppose there is a vertex $x \in \gamma \cap h(\gamma)$. Then $h(x) \in h(\gamma)$ and $h(x) \neq x$, since $h$ has no
fixed points. We may assume, not losing generality, that $h(v), x, h(x)$ and $h(w)$ lie on $h(\gamma)$
in this order. Then
$d(x,h(x)) = d(h(v),h(x)) - d(h(v),x) = d(v,x) - d(h(v),x) \leq d(v,h(v)) = |h|$.
So $x \in \rm{Min}_{X}(h)$, contradicting the minimality of $d(v,w)$.
Thus the geodesics $\alpha, \beta, \gamma, h(\gamma)$ either are pairwise disjoint but the endpoints or $\alpha$ and
$\beta$ have nonempty intersection. In both situations we proceed as follows.

Let $y,x$ be adjacent vertices on $\gamma$ such that $d(y,v) = d(x,v) - 1$. It may happen that $y = v$ or $x = w$ but not simultaneously due to the fact that $d(v,w) > 1$. The vertex $y$ is the last vertex of $\gamma$ such that $d(y,h(y)) = d(y,v) + d(v,h(v)) + d(h(v),h(y))$ (i.e.
$y$ is the last vertex of $\gamma$ to be joined with $h(y)$ by the left of the cycle $\gamma \star \beta \star h(\gamma) \star \alpha$). The vertex $x$ is the first vertex of $\gamma$ such that $d(x,h(x)) = d(x,w) + d(w,h(w)) + d(h(w),h(x))$ (i.e. $x$ is the first vertex of $\gamma$ to be joined with $h(x)$ by the right of the cycle $\gamma \star \beta \star h(\gamma) \star \alpha$). 
 Let $v' \in \gamma, v' \sim v$ (possibly with $v' = y$).

There are two cases: either $l(\gamma) = 2$ or $l(\gamma) \geq 3$.

Assume first $l(\gamma) = 2$. Then $y=x$. Note that $d(v,h(y)) = d(w,h(y)) = |h| + 1$. 

Note that $y \in B_{2+|h|}(h(y))$. Because $v,w \in X_{y} \cap B_{1+|h|}(h(y))$, the (V) condition of the SD'(h(y))-property implies that there exists a vertex $t \in X_{y} \cap B_{1+|h|}(h(y))$ such that $t \sim v,w$.

Because $v,t \in B_{1+|h|}(h(y))$, $v \sim t$, the (E) condition of the SD'(h(y))-property implies that there exists a vertex $p \in B_{|h|}(h(y))$ such that $p \sim v,t$.

Because $t,w \in B_{1+|h|}(h(y))$, $t \sim w$, the (E) condition of the SD'(h(y))-property implies that there exists a vertex $q \in B_{|h|}(h(y))$ such that $q \sim t,w$.

Note that $y \in B_{2 +|h|} (h(y))$, $v,t,w \in X_{y} \cap B_{1 +|h|} (h(y))$,  $p,q \in B_{|h|} (h(y)), p \sim v,t; q \sim t,w$. Then Lemma \ref{2.5} implies that $p \sim q$.

Let $l \in \beta$ such that $w \sim l$.  Because $q,l \in X_{w} \cap B_{|h|}(h(y))$, the (V) condition of the SD'(h(y))-property implies that there exists a vertex $r \in X_{w} \cap B_{|h|}(h(y))$ such that $r \sim q,l$.

Because $p,q \in B_{|h|}(h(y))$, $p \sim q$, the (E) condition of the SD'(h(y))-property implies that there exists a vertex $m \in B_{|h|-1}(h(y))$ such that $m \sim p,q$.

Because $q,r \in B_{|h|}(h(y))$, $q \sim r$, the (E) condition of the SD'(h(y))-property implies that there exists a vertex $n \in B_{|h|-1}(h(y))$ such that $n \sim q,r$.

Note that $t,w \in B_{1+|h|}(h(y))$, $p,q,r \in B_{|h|}(h(y))$, $m,n \in X_{q} \cap  B_{|h|-1}(h(y)), p,q \in X_{t}$, $q, r \in X_{w}$. Then, because $t \sim w$, Lemma \ref{2.6} implies that $m \sim n$.

Let $\delta$ be the tightening of the cycle $(y,v,p,m,n,r,w)$. Note that $|\delta| \leq 7$ and the cycle $\delta$ is full. Then, by $8$-location, there is a vertex $f$ such that $\delta \subset X_{f}$. Hence $d(y,m) = 2$. But $y \in B_{2 + |h|}(h(y))$ while $m \in B_{|h|-1}(h(y))$. Therefore $d(y,m) = 3$. This yields a contradiction.

For the rest of the proof let $l(\gamma) \geq 3$.

Note that either $d(v',h(v')) = |h|+2$ or $d(v',h(v')) = |h|+1$ or $d(v',h(v')) = |h|$. We analyze these cases below.

Case A. Suppose $d(v',h(v')) = |h|+2$. So there do not exist vertices $a,b \in \alpha$ such that $v' \sim a \sim v$, $h(v') \sim b \sim h(v)$.

Case A.1. Assume $|\gamma| = 2k, k \in \mathbf{N}^{\star}$. 

Assume w.l.o.g. $d(y,v) = k$. Then, due to the choice of the vertices $x$ and $y$, we have $d(x,w) = k-1$. Recall $y$ is the last vertex of $\gamma$ to be joined with $h(y)$ by the left of the cycle $\gamma \star \beta \star h(\gamma) \star \alpha$; $x$ is the first vertex of $\gamma$ to be joined with $h(x)$ by the right of the cycle $\gamma \star \beta \star h(\gamma) \star \alpha$.

Let $z \in \gamma$ such that $z \sim y$, $d(z,v) = d(y,v) - 1$. Note that $d(z,h(y)) = d(x,h(y)) = 2k- 1 + |h|$. Hence $z,x \in X_{y} \cap B_{ 2k- 1 + |h|}(h(y))$. Then the (V) condition of the SD'($h(y)$)-property implies that there exists a vertex $t \sim x,z$ such that $t \in X_{y} \cap B_{2k- 1 + |h|}(h(y))$.

Note that $z,t \in B_{2k- 1 + |h|}(h(y))$ and $z \sim t$. Then, by the (E) condition of the SD'($h(y)$)-property, there exists $p \in B_{ 2k - 2 + |h|}(h(y))$ such that $p \sim z,t$.

Note that $t, x \in B_{ 2k- 2 + |h|}(h(y))$ and $t \sim x$. Then, by the (E) condition of the SD'($h(y)$)-property, there exists $q \in B_{ 2k- 1 + |h|}(h(y))$ such that $q \sim t,x$.

Note that $y \in B_{2k +|h|} (h(y))$, $z,t,x \in X_{y} \cap B_{2k-1 +|h|} (h(y))$,  $p,q \in X_{t} \cap B_{2k-2+|h|} (h(y)$, $p \sim z, q \sim x$. Then Lemma \ref{2.5} implies that $p \sim q$.

If $|\gamma| = 3$, let $u = w$.
If $|\gamma| > 3$, let $u \in \gamma$ such that $x \sim u$, $d(u,w) = d(x,w) - 1$. Note that $d(q,h(y)) = d(u,h(y)) = 2k - 2 + |h|$. Hence $q,u \in X_{x} \cap B_{2k - 2 + |h|}(h(y))$. Then the (V) condition of the SD'($h(y)$)-property implies that there exists a vertex $r \sim q,u$ such that $r \in X_{x} \cap B_{2k - 2 + |h|}(h(y))$. 

Note that $p,q \in B_{2k- 2 + |h|}(h(y))$ and $p \sim q$. Then by the (E) condition of the SD'($h(y)$)-property, there exists $m \in B_{2k - 3 + |h|}(h(y))$ such that $m \sim p,q$.

Note that $q,r \in B_{2k- 2 + |h|}(h(y))$ and $q \sim r$. Then by the (E) condition of the SD'($h(y)$)-property, there exists $n \in B_{2k - 3 + |h|}(h(y))$ such that $n \sim q,r$.

Note that $t,x \in B_{2k-1+|h|}(h(y))$, $p,q,r \in B_{2k-2+|h|}(h(y))$, $m,n \in $ $X_{q} \cap$ $ B_{2k-3+|h|}(h(y))$, $p,q \in X_{t}$, $q, r \in X_{x}$. Then, because $t \sim x$, Lemma \ref{2.6} implies that $m \sim n$.

Let $\delta$ be the tightening of the cycle $(y,z,p,m,n,r,x)$. Note that $|\delta| \leq 7$ and the cycle $\delta$ is full. Then, by $8$-location, there is a vertex $f$ such that $\delta \subset X_{f}$. Hence $d(y,m) = 2$. But $y \in B_{2k + |h|}(h(y))$ while $y \in B_{2k - 3 + |h|}(h(y))$. Therefore $d(y,m) = 3$. This yields a contradiction.

Case A.2. Assume $|\gamma| = 2k + 1, k \in \mathbf{N}^{\star}$. 

Assume first $d(v,y) = k+1$. Then $d(y,w) = k$. Note that $d(y,h(y)) = 2k + 1 + |h|$ by the left of the cycle $\gamma \star \beta \star h(\gamma) \star \alpha$ and $d(y,h(y)) = 2k + |h|$ by the right of the cycle $\gamma \star \beta \star h(\gamma) \star \alpha$. So the geodesic from $y$ to $h(y)$ passes by the right of the cycle $\gamma \star \beta \star h(\gamma) \star \alpha$. But the point $y$ is chosen such that the geodesic from $y$ to $h(y)$ passes by the left of the cycle $\gamma \star \beta \star h(\gamma) \star \alpha$. The situation $d(v,y) = k+1$ is therefore not possible. 
So the only possible case is when $d(v,y) = k$. Therefore $d(y,w) = k+1$, $d(x,w) = k$. 

Let $z \in \gamma$ such that $z \sim y$, $d(z,v) = d(y,v) - 1$. Note that $d(x,h(x)) = d(z,h(x)) = 2 k + |h|$. Because $x,z \in X_{y} \cap B_{2k+|h|} (h(x))$, the (V) condition of the SD'($h(x)$)-property implies that there exists a vertex $t \in X_{y} \cap B_{2k+|h|}(h(x))$ such that $t \sim x, z$.

Note that $z,t \in B_{2k + |h|}(h(x))$ and $z \sim t$. Then, by the (E) condition of the SD'($h(x)$)-property, there exists $p \in B_{2k - 1 + |h|}(h(y))$ such that $p \sim z,t$.

Note that $t,x \in B_{2k + |h|}(h(x))$ and $t \sim x$. Then, by the (E) condition of the SD'($h(x)$)-property, there exists $q \in B_{2k - 1 + |h|}(h(x))$ such that $q \sim t,x$.

Note that $y \in B_{2k + 1 +|h|} (h(x))$, $z,t,x \in X_{y} \cap B_{2k +|h|} (h(x))$,  $p,q \in X_{t} \cap B_{2k - 1 +|h|} (h(x)),$ $p \sim z, q \sim x$. Then Lemma \ref{2.5} implies that $p \sim q$.

If $|\gamma| > 5$,
let $l \in \gamma$ such that $z \sim l$, $d(l,v) = d(z,v) - 1$. If $|\gamma| \in \{3,5\}$, then $l \in \alpha$ such that $z \sim l$, $d(l,v) = d(z,v) - 1$. Note that $d(l,h(x)) = d(p,h(x)) = 2 k - 1 + |h|$. Because $l,p \in X_{z} \cap B_{2k - 1 +|h|} (h(x))$, the (V) condition of the SD'($h(x)$)-property implies that there exists a vertex $s \in X_{z} \cap B_{2k - 1 +|h|}(h(x))$ such that $s \sim l, p$.

Because $l,s \in B_{2k - 1 +|h|}(h(x)), l \sim s$, the (E) condition of the SD'($h(x)$)-property implies that there exists a vertex $m \in B_{2k-2+|h|}(h(x))$ such that $m \sim l,s$.

Because $s,p \in B_{2k - 1 +|h|}(h(x)), s \sim p$, the (E) condition of the SD'($h(x)$)-property implies that there exists a vertex $n \in B_{2k-2+|h|}(h(x))$ such that $n \sim s,p$.

Because $p,q \in B_{2k-1+|h|}(h(x)), p \sim q$, the (E) condition of the SD'($h(x)$)-property implies that there exists a vertex $r \in B_{2k-2+|h|}(h(x))$ such that $r \sim p,q$.

Note that $z \in B_{2k+|h|}(h(x))$, $l, s, p \in X_{z} \cap B_{2k-1+|h|}(h(x))$, $m, n \in X_{s} \cap B_{2k-2+|h|}(h(x))$, $m \sim l, n \sim p$. Then Lemma \ref{2.4} implies that $m \sim n$.

Note that $z, t \in B_{2k+|h|}(h(x))$, $s,p,q \in B_{2k-1+|h|}(h(x))$, $n, r \in X_{p} \cap B_{2k-2+|h|}(h(x))$, $s,p \in X_{z}, p,q \in X_{t}$, $n \sim s$, $r \sim q$. Then, because $z \sim t$, Lemma \ref{2.6} implies that $n \sim r$.

Let $\delta$ be the tightening of the cycle $(y,z,l,m,n,r,q,x)$. Note that $|\delta| \leq 8$ and the cycle $\delta$ is full. Then, by $8$-location, there is a vertex $f$ such that $\delta \subset X_{f}$. Hence $d(y,m) = 2$. But $y \in B_{2k + 1 + |h|}(h(y))$ while $m \in B_{2k - 2 + |h|}(h(y))$. Therefore $d(y,m) = 3$. This yields a contradiction.

In conclusion we have $d(v',h(v')) \neq |h|+2$. This completes case $A.$

Case $B.$ There exists a vertex $a \in \alpha, v \sim a \sim v'$. Suppose $d(v',h(v')) = |h|+1$.

Case B.1. Assume $|\gamma| = 2k, k \in \mathbf{N}^{\star}$. 

Assume w.l.o.g. $d(y,v) =k$. Then $d(y,h(y)) = 2 k - 1 + |h|$. Due to the choice of the vertices $x,y \in \gamma$, we have $d(x,w) = k-1$. Recall $y$ is the last vertex of $\gamma$ to be joined with $h(y)$ by the left of the cycle $\gamma \star \beta \star h(\gamma) \star \alpha$; $x$ is the first vertex of $\gamma$ to be joined with $h(x)$ by the right of the cycle $\gamma \star \beta \star h(\gamma) \star \alpha$. 

Note that $d(y,h(y)) = d(x,h(y)) = 2k-1+|h|$. Then $y,x \in B_{2k-1+|h|}(h(y))$. Because $y \sim x$, the (E) condition of the SD'($h(y)$)-property implies that there exists a vertex $t \in B_{2k-2+|h|}(h(y))$ such that $t \sim y,x$.

Let $l \in \gamma$ such that $x \sim l$, $d(l,w) = d(x,w) - 1$. Note that $t,l \in X_{x} \cap B_{2k-2+|h|}(h(y))$. Then the (V) condition of the SD'($h(y)$)-property implies that there exists a vertex $m \in X_{x} \cap B_{2k-2+|h|}(h(y))$ such that $m \sim t,l$.

Because $t,m \in B_{2k-2+|h|}(h(y)), t \sim m$, the (E) condition of the SD'($h(y)$)-property implies that there exists a vertex $r \in B_{2k-3+|h|}(h(y))$ such that $r \sim t,m$.

Because $m,l \in B_{2k-2+|h|}(h(y)), m \sim l$, the (E) condition of the SD'($h(y)$)-property implies that there exists a vertex $s \in B_{2k-3+|h|}(h(y))$ such that $s \sim m,l$.

Note that $x \in B_{2k-1+|h|}(h(y))$, $t,m,l \in X_{x} \cap B_{2k-2+|h|}(h(y))$ and $r,s \in X_{m} \cap B_{2k-3+|h|}(h(y))$, $r \sim t, s \sim l$. Then Lemma \ref{2.4} implies that $r \sim s$.

If $|\gamma| = 4$, then let $u = w$.
If $|\gamma| > 4$, let $u \in \gamma$ such that $l \sim u$, $d(u,w) = d(l,w) - 1$.  Note that $s,u \in X_{l} \cap B_{2k-3+|h|}(h(y))$. Then the (V) condition of the SD'$(h(y))$-property implies that there exists a vertex $p \in X_{l} \cap B_{2k-3+|h|}(h(y))$ such that $p \sim s,u$.

Because $r,s \in B_{2k-3+|h|}(h(y)), r \sim s$, the (E) condition of the SD'$(h(y))$-property implies that there exists a vertex $c \in B_{2k-4+|h|}(h(y))$ such that $c \sim r,s$.

Because $s,p \in B_{2k-3+|h|}(h(y)), s \sim p$, the (E) condition of the SD'$(h(y))$-property implies that there exists a vertex $d \in B_{2k-4+|h|}(h(y))$ such that $d \sim s,p$.

Note that $m, l \in B_{2k-2+|h|}(h(y))$, $r,s,p \in B_{2k-3+|h|}(h(y))$, $c, d \in X_{s} \cap $ $B_{2k-4+|h|}(h(y))$, $r,s \in X_{m}, s,p \in X_{l}$. Then, because $m \sim l$, Lemma \ref{2.6} implies that $c \sim d$.

Let $\delta$ be the tightening of the cycle $(x,t,r,c,d,p,l)$. Note that $|\delta| \leq 7$ and the cycle $\delta$ is full. Then, by $8$-location, there is a vertex $f$ such that $\delta \subset X_{f}$. Hence $d(x,c) = 2$. But $x \in B_{2k - 1 + |h|}(h(y))$ while $c \in B_{2k - 4 + |h|}(h(y))$. Therefore $d(x,c) = 3$. This yields a contradiction.

Case B.$2$. Assume $|\gamma| = 2k+1, k \in \mathbf{N}^{\star}$. 

Assume first $d(v,y) = k+1$. Then $d(y,w) = k$. Note that $d(y,h(y)) = 2k + 1 + |h|$ by the left of the cycle $\gamma \star \beta \star h(\gamma) \star \alpha$ and $d(y,h(y)) = 2k + |h|$ by the right of the cycle $\gamma \star \beta \star h(\gamma) \star \alpha$. So the geodesic from $y$ to $h(y)$ passes by the right of the cycle $\gamma \star \beta \star h(\gamma) \star \alpha$. But the point $y$ is chosen such that the geodesic from $y$ to $h(y)$ passes by the left of the cycle $\gamma \star \beta \star h(\gamma) \star \alpha$. The situation $d(v,y) = k+1$ is therefore not possible. 
So the only possible case is when $d(v,y) = k$. Therefore $d(y,w) = k+1$, $d(x,w) = k$. 

Note that $d(x,h(x)) = d(y,h(x)) = 2k+|h|$. Hence $y,x \in B_{2k+|h|}(h(x))$. Then, since $x \sim y$, the (E) condition of the SD'(h(x))-property implies that there exists a vertex $t \in B_{2k-1+|h|}(h(x))$ such that $t \sim y,x$.

If $|\gamma| = 3,$ let $z \in \alpha, z \sim y$.
If $|\gamma| > 3,$ let $z \in \gamma$ such that $z \sim y$, $d(z,v) = d(y,v) - 1$. Note that $z,t \in X_{y} \cap B_{2k-1+|h|}(h(x))$. Then the (V) condition of the SD'$(h(x))$-property implies that there exists a vertex $u \in X_{y} \cap B_{2k-1+|h|}(h(x))$ such that $u \sim z,t$.

Because $z,u \in B_{2k-1+|h|}(h(x)), z \sim u$, the (E) condition of the SD'$(h(x))$-property implies that there exists a vertex $p \in B_{2k-2+|h|}(h(x))$ such that $p \sim z,u$.

Because $u,t \in B_{2k-1+|h|}(h(x)), u \sim t$, the (E) condition of the SD'$(h(x))$-property implies that there exists a vertex $q \in B_{2k-2+|h|}(h(x))$ such that $q \sim u,t$.

Note that $y \in B_{2k+|h|}(h(x))$, $z,u,t \in X_{y} \cap B_{2k-1+|h|}(h(x))$ and $p,q \in X_{u} \cap B_{2k-2+|h|}(h(x))$, $p \sim z, q \sim t$. Then Lemma \ref{2.4} implies that $p \sim q$.

If $|\gamma| = 3$, let $l \in \alpha, l \sim z$. If $|\gamma| = 5$, let $l = v$. 
If $|\gamma| > 5$, let $l \in \gamma$ such that $l \sim z$, $d(l,v) = d(z,v) - 1$. Note that $l,p \in X_{z} \cap B_{2k-2+|h|}(h(x))$. Then the (V) condition of the SD'$(h(x))$-property implies that there exists a vertex $r \in X_{z} \cap B_{2k-2+|h|}(h(x))$ such that $r \sim l,p$.

Because $r,p \in B_{2k-2+|h|}(h(x)), r \sim p$, the (E) condition of the SD'$(h(x))$-property implies that there exists a vertex $n \in B_{2k-3+|h|}(h(x))$ such that $n \sim r,p$.

Because $p,q \in B_{2k-2+|h|}(h(x)), p \sim q$, the (E) condition of the SD'$(h(x))$-property implies that there exists a vertex $c \in B_{2k-3+|h|}(h(x))$ such that $c \sim p,q$.

Note that $z, u \in B_{2k-1+|h|}(h(x))$, $r,p,q \in B_{2k-2+|h|}(h(x))$, $r,p \in X_{z}, p,q \in X_{u}$, $n,c \in X_{p} \cap B_{2k-3+|h|}(h(x))$. Then, because $z \sim u$, Lemma \ref{2.6} implies that $n \sim c$.

Let $\delta$ be the tightening of the cycle $(y,z,r,n,c,q,t)$. Note that $|\delta| \leq 7$ and the cycle $\delta$ is full. Then, by $8$-location, there is a vertex $f$ such that $\delta \subset X_{f}$. Hence $d(y,n) = 2$. But $y \in B_{2k + |h|}(h(x))$ while $n \in B_{2k - 3 + |h|}(h(x))$. Therefore $d(y,n) = 3$. This yields a contradiction.

In conclusion we have $d(v',h(v')) \neq |h|+1$. This completes case $B.$
 
Case $C.$  There exists a vertex $b \in \alpha$ such that $h(v) \sim b \sim h(v')$. Suppose $d(v',h(v')) = |h|+1$.

Case $C.1$. Assume $|\gamma| = 2k, k \in \mathbf{N}^{\star}$. Assume w.l.o.g. $d(y,v) = k$. Then, due to the choice of the vertices $y,x \in \gamma$, we have $d(x,w) = k-1$. Recall $y$ is the last vertex of $\gamma$ to be joined with $h(y)$ by the left of the cycle $\gamma \star \beta \star h(\gamma) \star \alpha$; $x$ is the first vertex of $\gamma$ to be joined with $h(x)$ by the right of the cycle $\gamma \star \beta \star h(\gamma) \star \alpha$. 

Note that $d(h(y),y) = d(h(x),y) = 2k-1+|h|$. Because $h(y),h(x) \in B_{2k-1+|h|}(y)$ and $h(y) \sim h(x)$, the (E) condition of the SD'$(y)$-property implies that there exists a vertex $t \in B_{2k-2+|h|}(y)$ such that $t \sim h(y),h(x)$.

If $|\gamma| = 3$, let $l = h(w)$.
If $|\gamma| > 3$, let $l \in h(\gamma)$ such that $l \sim h(x)$, $d(l,h(w)) = d(h(x),h(w)) - 1$. Note that $t,l \in X_{h(x)} \cap B_{2k-2+|h|}(y)$. Then the (V) condition of the SD'$(y)$-property implies that there exists a vertex $u \in X_{h(x)} \cap B_{2k-2+|h|}(y)$ such that $u \sim t,l$.

Note that $t,u \in B_{2k-2+|h|}(y), t \sim u$, the (E) condition of the SD'$(y)$-property implies that there exists a vertex $p \in B_{2k-3+|h|}(y)$ such that $p \sim t,u$.

Note that $u,l \in B_{2k-2+|h|}(y), u \sim l$, the (E) condition of the SD'$(y)$-property implies that there exists a vertex $q \in B_{2k-3+|h|}(y)$ such that $q \sim u,l$.

Note that $h(x) \in B_{2k-1+|h|}(y)$, $t,u,l \in X_{h(x)} \cap B_{2k-2+|h|}(y)$ and $p,q \in X_{u} \cap B_{2k-3+|h|}(y)$, $p \sim t, q \sim l$. Then Lemma \ref{2.5} implies that $p \sim q$.

If $|\gamma| = 4$, then $z \in \beta$, $z \sim l$. If $|\gamma| = 6$, then  $z = w$. 
If $|\gamma| > 6$, let $z \in h(\gamma)$ such that $z \sim l$, $d(z,h(w)) = d(l,h(w)) - 1$. Note that $q,z \in X_{l} \cap B_{2k-3+|h|}(y)$. Then the (V) condition of the SD'$(y)$-property implies that there exists a vertex $n \in X_{l} \cap B_{2k-3+|h|}(y)$ such that $n \sim q,z$.

Because $p,q \in B_{2k-3+|h|}(y), p \sim q$, the (E) condition of the SD'$(y)$-property implies that there exists a vertex $r \in B_{2k-4+|h|}(y)$ such that $r \sim p,q$.

Because $q,n \in B_{2k-3+|h|}(y), q \sim n$, the (E) condition of the SD'$(y)$-property implies that there exists a vertex $c \in B_{2k-4+|h|}(y)$ such that $c \sim q,n$.

Note that $u, l \in B_{2k-2+|h|}(y)$, $p,q,n \in B_{2k-3+|h|}(y)$, $p,q \in X_{u}, q,n \in X_{l}$, $r,c \in X_{q} \cap B_{2k-4+|h|}(y), p \sim r, n \sim c$. Then, because $u \sim l$, Lemma \ref{2.6} implies that $r \sim c$.

Let $\delta$ be the tightening of the cycle $(h(x),t,p,r,c,n,l)$. Note that $|\delta| \leq 7$ and the cycle $\delta$ is full. Then, by $8$-location, there is a vertex $f$ such that $\delta \subset X_{f}$. Hence $d(h(x),r) = 2$. But $h(x) \in B_{2k -1 + |h|}(y)$ while $r \in B_{2k - 4 + |h|}(y)$. Therefore $d(h(x),r) = 3$. This yields a contradiction.

Case $C.2$. Assume $|\gamma| = 2k + 1, k \in \mathbf{N}^{\star}$. %Then, due to the choice of the vertices $x$ and $y$, we have %$d(y,v) = d(x,w) = k$. This is the only possible situation due %to the choice of the vertices $x,y \in \gamma$ ($y$ is the last %vertex of $\gamma$ to be joined with $h(y)$ by the left of the %cycle $\gamma \star \beta \star h(\gamma) \star \alpha$; $x$ is %the first vertex of $\gamma$ to be joined with $h(x)$ by the %right of the cycle $\gamma \star \beta \star h(\gamma) \star %\alpha$).

Assume first $d(v,y) = k+1$. Then $d(y,w) = k$. Note that $d(y,h(y)) = 2k + 1 + |h|$ by the left of the cycle $\gamma \star \beta \star h(\gamma) \star \alpha$ and $d(y,h(y)) = 2k + |h|$ by the right of the cycle $\gamma \star \beta \star h(\gamma) \star \alpha$. So the geodesic from $y$ to $h(y)$ passes by the right of the cycle $\gamma \star \beta \star h(\gamma) \star \alpha$. But the point $y$ is chosen such that the geodesic from $y$ to $h(y)$ passes by the left of the cycle $\gamma \star \beta \star h(\gamma) \star \alpha$. The situation $d(v,y) = k+1$ is therefore not possible. 
So the only possible case is when $d(v,y) = k$. Therefore $d(y,w) = k+1$, $d(x,w) = k$.

Note that $d(h(x),x) = d(h(y),x) = 2k + |h|$. Because $h(y),h(x) \in B_{2k+|h|}(x)$ and $h(y) \sim h(x)$, the (E) condition of the SD'$(x)$-property implies that there exists a vertex $t \in B_{2k-1+|h|}(x)$ such that $t \sim h(y),h(x)$.

If $|\gamma| = 3$, let $l = h(w)$.
If $|\gamma| > 3$, let $l \in h(\gamma)$ such that $l \sim h(x)$, $d(l,h(w)) = d(h(x),h(w)) - 1$. Note that $t,l \in X_{h(x)} \cap B_{2k-1+|h|}(x)$. Then the (V) condition of the SD'$(x)$-property implies that there exists a vertex $u \in X_{h(x)} \cap B_{2k-1+|h|}(x)$ such that $u \sim t,l$.

Because $t,u \in B_{2k-1+|h|}(x), t \sim u$, the (E) condition of the SD'$(x)$-property implies that there exists a vertex $p \in B_{2k-2+|h|}(x)$ such that $p \sim t,u$.

Because $u,l \in B_{2k-1+|h|}(x), u \sim l$, the (E) condition of the SD'$(x)$-property implies that there exists a vertex $q \in B_{2k-2+|h|}(x)$ such that $q \sim u,l$.

Note that $h(x) \in B_{2k+|h|}(x)$, $t,u,l \in X_{h(x)} \cap B_{2k-1+|h|}(x)$, $p,q \in X_{u} \cap B_{2k-2+|h|}(x)$, $p \sim t, q \sim l$. Then Lemma \ref{2.5} implies that $p \sim q$.

If $|\gamma| = 3$, let $z \in \beta$. 
If $|\gamma| = 5$, let $z = h(w)$.
If $|\gamma| > 5$, let $z \in h(\gamma)$ such that $z \sim l$, $d(z,h(w)) = d(l,h(w)) - 1$. Note that $q,z \in X_{l} \cap B_{2k-2+|h|}(x)$. Then the (V) condition of the SD'$(x)$-property implies that there exists a vertex $s \in X_{l} \cap B_{2k-2+|h|}(x)$ such that $s \sim q,z$.

Because $p,q \in B_{2k-2+|h|}(x), p \sim q$, the (E) condition of the SD'$(x)$-property implies that there exists a vertex $c \in B_{2k-3+|h|}(x)$ such that $c \sim p,q$.

Because $q,s \in B_{2k-2+|h|}(x), q \sim s$, the (E) condition of the SD'$(x)$-property implies that there exists a vertex $d \in B_{2k-3+|h|}(x)$ such that $d \sim q,s$.

Note that $u, l \in B_{2k-1+|h|}(x)$, $p,q,s \in B_{2k-2+|h|}(x)$, $p,q \in X_{u}, q,s \in X_{l}$, $c,d \in X_{q} \cap B_{2k-3+|h|}(y)$. Then, because $u \sim l$, Lemma \ref{2.6} implies that $c \sim d$.

Let $\delta$ be the tightening of the cycle $(h(x),t,p,c,d,s,l)$. Note that $|\delta| \leq 7$ and the cycle $\delta$ is full. Then, by $8$-location, there is a vertex $f$ such that $\delta \subset X_{f}$. Hence $d(h(x),c) = 2$. But $h(x) \in B_{2k + |h|}(x)$ while $c \in B_{2k - 3 + |h|}(x)$. Therefore $d(h(x),c) = 3$. This yields a contradiction.

In conclusion we have $d(v',h(v')) \neq |h|+1$. This completes case $C.$

Case $D.$ There exist vertices $a,b \in \alpha$ such that $v' \sim a \sim v$, $h(v') \sim b \sim h(v)$. Then $d(v',h(v')) = |h|$ which yields a contradiction.

\end{proof}

\begin{lemma}\label{3.3}
Let $h$ be a (simplicial) isometry with no fixed points of an $8$-located complex $X$ with the SD'-property. Let $Y = \rm{Min}_{X}(h)$. Then $Y = \rm{Min}_{Y}(h)$.
\end{lemma}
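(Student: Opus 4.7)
The plan is to deduce this statement directly from Theorem \ref{3.2}, which supplies the only nontrivial input. First I would check that $Y$ is $h$-invariant so that the restriction $h|_Y$ makes sense: if $y \in Y$, then because $h$ is a simplicial isometry of $X$,
\[
d_X(h(y), h(h(y))) = d_X(y, h(y)) = |h|,
\]
so $h(y) \in Y$. Hence $h$ restricts to a simplicial self-isometry of $Y$, and the displacement function $d_h^Y : Y^{(0)} \to \mathbf{N}$, $d_h^Y(y) = d_Y(y, h(y))$, is well defined.

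Next I would invoke Theorem \ref{3.2} (assuming the hypothesis $|h|>3$ carried over from there), which states that the $1$-skeleton of $Y$ is isometrically embedded in $X^{(1)}$. It follows that for every vertex $y \in Y^{(0)}$,
\[
d_h^Y(y) = d_Y(y, h(y)) = d_X(y, h(y)) = |h|.
\]
Combined with the trivial inequality $d_Y \geq d_X$ on pairs of vertices of $Y$, this shows that the infimum of $d_h^Y$ over $Y^{(0)}$ equals $|h|$ and is attained at every vertex of $Y$. Therefore $Y \subseteq \mathrm{Min}_Y(h)$, and since the reverse inclusion $\mathrm{Min}_Y(h) \subseteq Y$ is immediate from the definition of the minimal displacement set, we conclude $Y = \mathrm{Min}_Y(h)$.

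The argument is thus a direct corollary of Theorem \ref{3.2}; there is no real obstacle once isometric embedding is in hand, and the main step is simply translating the equality of $Y$- and $X$-distances on $Y^{(0)}$ into the equality of the corresponding displacement functions.
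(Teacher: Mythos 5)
Your argument is correct and has the same overall shape as the paper's (show that every vertex of $Y$ has the same displacement measured in $Y$ as in $X$, hence all of $Y$ realises the minimum), but you justify the key equality $d_Y(y,h(y))=|h|$ differently, and this costs you a hypothesis. You invoke Theorem \ref{3.2}, which forces you to assume $|h|>3$; the lemma as stated carries no such restriction, so strictly speaking your proof only covers that case (harmless for the paper's later applications, where $|h|>3$ is always assumed, but still a restriction). The paper's own, very terse, proof instead rests implicitly on Lemma \ref{3.1}: for any $v\in\mathrm{Min}_X(h)$ and any $X$-geodesic $\alpha$ from $v$ to $h(v)$, the bi-infinite concatenation of the $h^n(\alpha)$ lies entirely in $\mathrm{Min}_X(h)$, so in particular $\alpha\subset Y$ and $d_Y(v,h(v))\le |h|$; combined with $d_Y\ge d_X$ this gives $d_Y(v,h(v))=|h|$ for every $v\in Y$ with no assumption on $|h|$ and without appealing to the much harder isometric-embedding theorem. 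So your route buys immediacy at the price of an extra hypothesis and a heavier input; the Lemma \ref{3.1} route proves the lemma in the generality in which it is stated. If you keep your version, you should either add $|h|>3$ to the statement or replace the appeal to Theorem \ref{3.2} by the Lemma \ref{3.1} argument. (Your preliminary observation that $Y$ is $h$-invariant is correct and is a point the paper glosses over.)
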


\begin{proof}
Let $x \in X$ such that $d_{X}(x,h(x)) = |h|$. Then $x \in Y$. Let $y = h(x) \in Y$ such that $d_{Y}(y,h(y)) = |h|$. So $y \in \rm{Min}_{Y}(h)$. Since $d_{X}(x,h(x)) = d_{Y}(y,h(y))$, we have $Y = \rm{Min}_{Y}(h)$.

\end{proof}

The construction of a low-dimensional classifying space for the family of virtually cyclic subgroups of a group acting properly on an $8$-located complex with the $SD'$-property relies on the following result.

\begin{theorem}\label{3.4}
Let $h$ be a (simplicial) isometry having no fixed points with $|h| > 3$, of an $8$-located complex $X$ with the SD'-property. Then
the set $\rm{Min}_{X}(h)$ is Gromov hyperbolic. In particular, $\rm{Min}_{X}(h)$ is systolic.
\end{theorem}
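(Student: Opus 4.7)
The argument naturally splits into two parts: establishing Gromov hyperbolicity of $\rm{Min}_X(h)$ as a metric space, and then upgrading this to the systolic conclusion.

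For Gromov hyperbolicity I would combine Theorem \ref{2.4} with Theorem \ref{3.2}. By Theorem \ref{2.4} the $0$-skeleton of $X$ with its induced path metric is $\delta$-hyperbolic for a universal $\delta$; by Theorem \ref{3.2} the inclusion $\rm{Min}_X(h)^{(1)} \hookrightarrow X^{(1)}$ is an isometric embedding, so the induced path metric on $\rm{Min}_X(h)^{(0)}$ coincides with the restriction of the path metric on $X^{(0)}$. Since Gromov $\delta$-hyperbolicity (in the sense of the four-point condition) is a purely metric property preserved by isometric embeddings, $\rm{Min}_X(h)^{(0)}$ is $\delta$-hyperbolic with the same constant, yielding the first assertion.

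For the systolic conclusion I would verify that $\rm{Min}_X(h)$ is flag, simply connected, and locally $6$-large. Flagness is automatic, since $\rm{Min}_X(h)$ is a full subcomplex of the flag complex $X$. For simple connectivity, the plan is to transfer the $SD'$-property to $\rm{Min}_X(h)$ and then apply Proposition \ref{2.1a}: the isometric embedding together with Lemma \ref{3.3} ensures that balls and spheres around a basepoint $O \in \rm{Min}_X(h)$ computed inside the subcomplex coincide with their counterparts in $X$, and Lemma \ref{3.1} can be used to produce $h$-invariant geodesics through candidate witnesses so that they lie inside $\rm{Min}_X(h)$. Local $6$-largeness would then follow from the observation that any full $k$-cycle with $k<6$ in a link of $\rm{Min}_X(h)$ persists as a full $k$-cycle in a link of $X$, contradicting the combination of $8$-location and Gromov hyperbolicity already at the ambient level.

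The main obstacle, paralleling Elsner's treatment of the systolic case in \cite{E2}, is the verification that the witnesses provided by the triangle condition $(T)$ and the vertex condition $(V)$ in $X$ can be chosen inside $\rm{Min}_X(h)$. I expect this to require the same iterated application of Lemmas \ref{2.5} and \ref{2.6}, combined with the $h$-invariance of $\rm{Min}_X(h)$ from Lemma \ref{3.3} and the construction of $|h|$-geodesics from Lemma \ref{3.1}, that drives the long case analysis in the proof of Theorem \ref{3.2}.
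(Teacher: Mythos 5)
Your first step is sound and is essentially the route the paper takes for hyperbolicity: Theorem \ref{2.4} gives $\delta$-hyperbolicity of $X$, Theorem \ref{3.2} gives the isometric embedding of $\mathrm{Min}_X(h)^{(1)}$, and hyperbolicity (in the four-point sense) restricts to isometrically embedded subspaces. The paper's proof likewise reduces everything to the ambient complex via Theorem \ref{3.2}, though it phrases the reduction in terms of pushing $k$-wheels of $\mathrm{Min}_X(h)$ into $X$.

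The systolic half is where your proposal has a genuine gap. You claim that any full $k$-cycle with $k<6$ in a link of $\mathrm{Min}_X(h)$ persists in a link of $X$ and there "contradicts $8$-location and Gromov hyperbolicity." For $k=4$ this is false: $8$-location explicitly permits essential $4$-loops (a full, homotopically trivial $4$-cycle in a link is merely required to lie in a $1$-ball, which it does automatically), so an $8$-located complex --- and a priori $\mathrm{Min}_X(h)$ --- may contain $4$-wheels; nothing at the ambient level excludes them. Moreover, Gromov hyperbolicity is a coarse, large-scale property and cannot by itself forbid any bounded configuration such as a $5$- or $6$-wheel, so even for $5\le k\le 6$ the contradiction needs a further argument. (The paper's own proof runs the same "transport the wheel into $X$" step, but only treats $k$-wheels with $5\le k\le 6$ and is equally terse about why hyperbolicity excludes them; it does not address $4$-cycles in links either.) Separately, your treatment of simple connectivity of $\mathrm{Min}_X(h)$ --- transferring the $SD'$-property so as to invoke Proposition \ref{2.1a} --- is only announced: you defer exactly the hard verification, namely that the witnesses of conditions (T) and (V) can be chosen inside $\mathrm{Min}_X(h)$, without carrying it out. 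As written, the proposal does not establish the systolic conclusion.
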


\begin{proof}
Theorem \ref{2.4} implies that $X$ is Gromov hyperbolic.
Let $Y = \rm{Min}_{X}(h)$.
Lemma \ref{3.3} implies that $Y = \rm{Min}_{Y}(h)$. The proof is by contradiction.
Suppose there exists a $k$-wheel $\gamma = (z;x_{1}, ..., x_{k}) \subset Y$, $5 \leq k \leq 6$. According to Lemma \ref{3.2}, the $1$-skeleton of $Y$ is isometrically embedded into $X$. Then the $k$-wheel $\gamma$ also belongs to $X$. Due to the Gromov hyperbolicity of $X$, this yields a contradiction.
So there does not exist any $k$-wheel in $Y$, $5 \leq k \leq 6$. This implies that $Y$ is  Gromov hyperbolic. In particular, $Y$ is systolic.

\end{proof}

%\begin{theorem}\label{3.5}
%Let $h$ be a (simplicial) isometry of a weakly systolic complex $X$ %having no fix-points such that
%the set $\rm{Min}(h) \subset X$ is isometrically embedded and it %satisfies the extended $5$-wheel condition (i.e. $(C_{5},W_{5})$, %$\hat{W}_{5}$-condition). Then $\rm{Min}(h) \subset X$ is weakly %systolic.
%\end{theorem}

%\begin{proof}

%There are no $5$-wheels by hypothesis. There are no full $4$-cycles %by weak systolicity. 

%\end{proof}

The following results on $8$-located complexes with the SD'-property are immediate consequences of the fact that the minimal displacement set of a nonelliptic isometry acting on such complex is a systolic subcomplex and it embeds isometrically into the complex. Their systolic analogues, also given below, imply these similarities. We shall refer to these results when constructing a low-dimensional classifying space for the family of virtually cyclic subgroups of a group acting properly on an $8$-located complex with the SD'-property.

\begin{theorem}\label{3.5}
Let $h$ be a nonelliptic simplicial isometry of a uniformly
locally finite systolic complex $X$. Then there is an $h^{n}$-invariant geodesic for some $n \geq 1$.
\end{theorem}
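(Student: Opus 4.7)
The plan is to produce an $h^n$-invariant bi-infinite geodesic by combining a compactness extraction with a pigeonhole argument, using uniform local finiteness as the finiteness input. Since $h$ is nonelliptic, $|h| > 0$ and there is a vertex $v$ with $d_X(v,h(v)) = |h|$. I would first choose a $1$-skeleton geodesic $\alpha$ from $v$ to $h(v)$ and, as in the proof of Lemma~\ref{3.1}, form the bi-infinite concatenation $\gamma$ of the translates $h^k(\alpha)$, $k\in\mathbf{Z}$. By the systolic analogue of that lemma, $\gamma$ is a $|h|$-local geodesic lying in $\rm{Min}_{X}(h)$, and in particular $d_X(h^{-k}(v), h^k(v)) = 2k|h|$ for all $k$.

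First I would extract a genuine bi-infinite geodesic $\sigma$ through a point close to $v$. For each $k \geq 1$, pick a true $1$-skeleton geodesic $\delta_k$ from $h^{-k}(v)$ to $h^k(v)$ (of length $2k|h|$), parametrized so that its midpoint is $\delta_k(0)$. Uniform local finiteness implies that every combinatorial ball in $X$ is finite, so a standard K\"onig's lemma / diagonal argument applied to the finite truncations $\delta_k|_{[-N,N]}$ yields a bi-infinite geodesic $\sigma : \mathbf{Z} \to X^{(1)}$ with $\sigma(0)$ in a bounded neighborhood of $v$. Since each finite subpath of $\sigma$ appears as a subpath of some $\delta_k$, every vertex of $\sigma$ lies on a genuine geodesic between two iterates of $v$, forcing $\sigma$ to sit entirely in $\rm{Min}_{X}(h)$.

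Next I would pass to a power by pigeonhole. For each $m \in \mathbf{Z}$ set $\sigma_m(n) := h^{-m}(\sigma(n + m|h|))$; this reparametrized translate is again a bi-infinite geodesic in $\rm{Min}_{X}(h)$, and $\sigma$ is $h^n$-invariant precisely when $\sigma_n = \sigma$. I would then argue that the sequence $\{\sigma_m(0)\}_{m \in \mathbf{Z}}$ stays in a bounded neighborhood of $v$ (using that $\sigma$ lives in $\rm{Min}_{X}(h)$ and the displacement $d_X(\sigma(j), h(\sigma(j))) = |h|$ prevents $\sigma$ from drifting away from the $h$-orbit of $v$), so uniform local finiteness confines $\{\sigma_m\}$ to a finite family of bi-infinite geodesics. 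A pigeonhole then produces indices $m < m'$ with $\sigma_m = \sigma_{m'}$; setting $n = m' - m$ rewrites as $h^n(\sigma(k)) = \sigma(k + n|h|)$ for every $k$, exhibiting an $h^n$-invariant geodesic.

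The hardest step is the boundedness claim on $\{\sigma_m(0)\}$ in the last paragraph: since $\sigma_m(0) = h^{-m}(\sigma(m|h|))$, this amounts to showing that a bi-infinite geodesic in the minimal displacement set stays close (in a uniform sense) to the discrete $h$-orbit of its basepoint. I expect this to follow from systolic convexity of the displacement function on $\rm{Min}_{X}(h)$, which is the usual route in Elsner's work and is analogous to the CAT(0) case. Once that is in hand, uniform local finiteness finishes the pigeonhole.
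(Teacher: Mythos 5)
The paper does not actually prove Theorem \ref{3.5}; it only cites Elsner's \emph{Isometries of systolic spaces} for it, so your argument has to stand on its own. Its overall shape (a compactness extraction of a bi-infinite geodesic from the segments joining $h^{-k}(v)$ to $h^{k}(v)$, followed by a pigeonhole to obtain invariance under a power) is the standard strategy, but the sketch fails at each of its load-bearing points.

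First, the assertion that $d_{X}(h^{-k}(v),h^{k}(v))=2k|h|$ does not follow from $\gamma$ being an $|h|$-geodesic: that property controls $d(\gamma(a),\gamma(b))$ only for $|a-b|\leq|h|$ and yields merely the inequality $\leq 2k|h|$ at larger scales. Moreover, if this equality did hold for all $k$, then $\gamma$ itself would already be a global geodesic, hence an $h$-invariant geodesic, and the theorem would be trivial with $n=1$; the statement allows $n>1$ precisely because this fails in general. Second, the boundedness claim you yourself identify as the hardest step is false for an arbitrary limit geodesic. Take $X$ the equilateral triangulation of the plane, $v=0$, and $h$ the translation by $e_{1}+e_{2}$ (two lattice generators at $60$ degrees), so $|h|=2$. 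The bent path $\sigma$ with $\sigma(t)=te_{1}$ for $t\geq 0$ and $\sigma(t)=te_{2}$ for $t\leq 0$ is a genuine bi-infinite geodesic, every finite subpath of which extends to a geodesic from $h^{-k}(v)$ to $h^{k}(v)$ for all large $k$; it is therefore a legitimate output of your K\"onig's lemma extraction. Yet $d(\sigma(2m),h^{m}(v))=d(2me_{1},m(e_{1}+e_{2}))=m\to\infty$, so $\{\sigma_{m}(0)\}$ is unbounded and $\sigma$ is not invariant under any power of $h$ (even though an $h$-invariant geodesic exists here). No convexity appeal can repair this: the extraction must be made canonical, which is exactly where the real work in Elsner's argument lies. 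Third, even where the basepoints $\sigma_{m}(0)$ can be confined to a finite set of vertices, this does not confine the geodesics $\sigma_{m}$ to a finite family --- infinitely many bi-infinite geodesics pass through a single vertex of a uniformly locally finite complex --- so the pigeonhole as stated does not produce $m<m'$ with $\sigma_{m}=\sigma_{m'}$.
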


For the proof see \cite{E2}, Theorem $3.5$, page $46$.

\begin{theorem}\label{3.6}
Let $h$ be a nonelliptic simplicial isometry of a uniformly
locally finite $8$-located complex $X$ with the SD'-property. Assume $|h| > 3$. Then in $X$ there is an $h^{n}$-invariant geodesic for some $n \geq 1$.
\end{theorem}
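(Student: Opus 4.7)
The plan is to reduce the statement to the systolic analogue (Theorem \ref{3.5}) by transferring the problem from $X$ to the minimal displacement set $Y = \mathrm{Min}_X(h)$, which Theorem \ref{3.4} certifies is systolic and Theorem \ref{3.2} certifies is isometrically embedded into $X$. An axis for $h^n$ inside $Y$ will then automatically be an axis in $X$, because isometric embedding preserves geodesics.

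First I would set $Y = \mathrm{Min}_X(h)$ and check that Theorem \ref{3.5} applies to the pair $(Y, h|_Y)$. Three hypotheses must be verified. (i) $Y$ is systolic: this is exactly the conclusion of Theorem \ref{3.4}, which uses $|h| > 3$. (ii) $Y$ is uniformly locally finite: since $Y$ is a full subcomplex of $X$, the link of any vertex in $Y$ sits inside the corresponding link in $X$, so the uniform bound on vertex degrees in $X$ passes to $Y$. (iii) $h$ restricts to a nonelliptic simplicial isometry of $Y$: the subcomplex $Y$ is $h$-invariant by construction, and Lemma \ref{3.3} together with the isometric embedding from Theorem \ref{3.2} shows that $h|_Y$ acts as a simplicial isometry of $Y$ with translation length $|h| > 0$; in particular $h|_Y$ fixes no simplex of $Y$ and hence is nonelliptic in the sense of Section 2.2.

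Having verified the hypotheses, Theorem \ref{3.5} produces, for some $n \geq 1$, an $h^n$-invariant geodesic $\gamma$ in $Y$. It remains to observe that $\gamma$ is also a geodesic in $X$: the isometric embedding $Y^{(1)} \hookrightarrow X^{(1)}$ provided by Theorem \ref{3.2} guarantees $d_X(\gamma(s),\gamma(t)) = d_Y(\gamma(s),\gamma(t)) = |s-t|$ for all parameters $s,t$, so $\gamma$ is $h^n$-invariant and geodesic in $X$, as required.

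The argument is essentially a packaging of previously established results, so there is no substantial combinatorial obstacle. The only point to take care with is the uniform local finiteness condition in (ii): one must make sure that this property genuinely descends to the full subcomplex $Y$ and that the bound obtained in $Y$ is independent of the vertex, which is immediate because full subcomplexes only shrink links. Apart from that, the assumption $|h| > 3$ is used exclusively to invoke Theorems \ref{3.2} and \ref{3.4}; once $Y$ is known to be systolic and isometrically embedded, no further combinatorial work inside $X$ is required.
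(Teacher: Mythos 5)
Your proposal is correct and follows essentially the same route as the paper: pass to $Y=\mathrm{Min}_X(h)$, invoke Theorem \ref{3.4} to get systolicity, apply Theorem \ref{3.5} inside $Y$, and transfer the axis back to $X$ via the isometric embedding of Theorem \ref{3.2}. Your explicit verification that uniform local finiteness and nonellipticity descend to $Y$ is a welcome bit of extra care that the paper's proof leaves implicit.
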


\begin{proof}

Let $Y = \rm{Min}_{X} (h)$. Since $|h| > 3$, Theorem \ref{3.4} implies that $Y$ is systolic. Then, by Theorem \ref{3.5}, there is in $Y$ an $h^{n}$-invariant geodesic $\gamma$ for some $n \geq 1$. Since, by Theorem \ref{3.2}, $Y^{(1)}$ is isometrically embedded into $X$, the $h^{n}$-invariant geodesic $\gamma$ also belongs to $X$. This completes the proof.

\end{proof}

\begin{theorem}\label{3.7}
Let $h$ be a simplicial isometry of a uniformly locally finite
systolic complex $X$. Then either there is an $h$-invariant simplex (elliptic case)
or there is an $h$-invariant thick geodesic (hyperbolic case).
\end{theorem}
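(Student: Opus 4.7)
The plan is to split into two cases by the dichotomy in the definition: either $h$ fixes some simplex of $X$, or it does not. In the elliptic case there is nothing to prove, so the essential content concerns the nonelliptic case, and the target is to produce an $h$-invariant thick geodesic (i.e.\ a union of $h$-invariant axes with a simplicial thickening on which $h$ acts as a translation by $|h|$).

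In the nonelliptic case, the first step is to invoke Theorem \ref{3.5} to secure an integer $n \geq 1$ and an $h^{n}$-invariant geodesic $\gamma$ in $X$. This $\gamma$ lies in the minimal displacement set $Y = \rm{Min}_{X}(h)$, which, by the systolic analogues of Theorem \ref{3.2} and Theorem \ref{3.4} (due to Elsner \cite{E2}), is itself systolic and isometrically embedded in $X$. The group $\langle h \rangle$ acts on $Y$, permuting $h^{n}$-invariant axes and preserving $|h|$-displacement on vertices. Consequently the finite collection of translates $\{h^{i}(\gamma)\}_{i=0}^{n-1}$ forms an $h$-invariant family of axes inside $Y$.

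The thick geodesic is then obtained by taking the simplicial span, inside $Y$, of the union of these translated axes together with the filling simplices that a systolic structure guarantees between axes at bounded distance. Because $Y$ is systolic and the axes all have the same translation length $|h|$, this span is a simplicial strip translated by $h$, which is the required $h$-invariant thick geodesic.

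The main technical obstacle, and where the argument in \cite{E2} does the real work, is upgrading from $h^{n}$-invariance to $h$-invariance while preserving the geodesic property: one must verify that consecutive translates of $\gamma$ lie in a common simplicial strip of $Y$ (a systolic flat-strip-type statement) and that the cyclic group $\langle h \rangle / \langle h^{n} \rangle$ acts compatibly on this strip. Once these ingredients are in place, the conclusion follows, and we refer to \cite{E2} for the full details.
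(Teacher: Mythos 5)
The paper itself offers no proof of this statement: it is imported verbatim from Elsner's work, and the entire ``proof'' is the citation ``see \cite{E2}, Theorem 3.8, page 49.'' Since your proposal also ends by deferring ``the full details'' to \cite{E2}, you and the paper land in the same place; the only question is whether the intermediate steps you do spell out are sound, and several of them are not, so the sketch should not be read as a correct outline that merely omits routine verifications.

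The central problem is your treatment of the term \emph{thick geodesic}. In \cite{E2} this is not ``a union of $h$-invariant axes with a simplicial thickening''; it is (roughly) a bi-infinite sequence of simplices $(\sigma_k)_{k\in\mathbf{Z}}$ every finite subsequence of which is a directed geodesic in the sense of Januszkiewicz--{\'S}wi{\c{a}}tkowski, and the whole point of the theorem is that such an object exists even when no $h$-invariant vertex-path geodesic does. Your construction --- take the $h^{n}$-invariant geodesic $\gamma$ from Theorem \ref{3.5}, form the orbit $\{h^{i}(\gamma)\}_{i=0}^{n-1}$, and take the simplicial span together with ``filling simplices'' --- does not produce such an object: the translates $h^{i}(\gamma)$ may be pairwise far apart, in which case their span is a disjoint union of lines and systolicity supplies no filling simplices between them; and even when they are close, an $h$-invariant subcomplex is not an $h$-invariant thick geodesic, which requires the sequence structure with $h$ acting by an index shift. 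The argument in \cite{E2} obtains $h$-invariance from the uniqueness and convexity properties of directed geodesics (via a limiting argument using local finiteness), not by aggregating translates of an $h^{n}$-axis, so the step you flag as ``the main technical obstacle'' is indeed the entire content of the theorem and is not addressed by your construction. Two smaller points: an $h^{n}$-invariant geodesic lies in $\mathrm{Min}_{X}(h^{n})$, and your assertion that it lies in $\mathrm{Min}_{X}(h)$ is unjustified (only the inclusion $\mathrm{Min}_{X}(h)\subset\mathrm{Min}_{X}(h^{n})$ is automatic); and the phrase ``the axes all have the same translation length $|h|$'' is not meaningful here, since $h$ preserves no individual translate and the relevant translation length along each axis is $|h^{n}|$.
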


For the proof see \cite{E2}, Theorem $3.8$, page $49$.

\begin{theorem}\label{3.8}
Let $h$ be a simplicial isometry of a uniformly locally finite $8$-located complex $X$ with the SD'-property. Assume $|h| > 3$. Then either there is an $h$-invariant simplex (elliptic case)
or there is an $h$-invariant thick geodesic (hyperbolic case).
\end{theorem}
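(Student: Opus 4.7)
The plan is to deduce Theorem \ref{3.8} from its systolic analogue Theorem \ref{3.7} by transporting the problem to the minimal displacement set $Y = \rm{Min}_{X}(h)$. If $h$ fixes some simplex of $X$, we are already in the elliptic case and there is nothing to prove. Otherwise $h$ has no fixed simplex, so in particular no fixed vertex, and together with $|h|>3$ we are placed squarely in the setting of the earlier results of this section.

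I would then assemble three ingredients about $Y$: it is systolic (Theorem \ref{3.4}), it satisfies $Y = \rm{Min}_{Y}(h)$ (Lemma \ref{3.3}), and its $1$-skeleton embeds isometrically into $X$ (Theorem \ref{3.2}). Since $X$ is uniformly locally finite and $Y$ is a subcomplex of $X$, uniform local finiteness is inherited by $Y$. Thus Theorem \ref{3.7} applies to $Y$. The hypothesis that $h$ has no fixed simplex in $X$ rules out the elliptic alternative in $Y$ as well, because a fixed simplex of $h$ in $Y$ would also be a fixed simplex of $h$ in $X$. Consequently Theorem \ref{3.7} delivers an $h$-invariant thick geodesic $\gamma \subset Y$. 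Transporting $\gamma$ through the isometric embedding $Y^{(1)} \hookrightarrow X^{(1)}$, it remains an $h$-invariant geodesic in $X$, and the resulting object is the desired thick geodesic.

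The one point requiring care is this last transfer: I must check that "thick geodesic" in the sense of \cite{E2} survives when $\gamma$ is viewed inside the larger complex $X$. Thickness is defined through local simplicial data around $\gamma$ (links and adjacent simplices witnessing the thickening), and since $Y$ sits isometrically in $X$ with $X$ merely carrying possibly additional simplices on top of those already in $Y$, any witness to thickness in $Y$ persists as a witness in $X$, while isometric embedding guarantees that $\gamma$ remains a genuine geodesic. This is the main — and essentially only — obstacle; once it is confirmed, the proof is complete.
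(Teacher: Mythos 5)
Your proposal follows the same route as the paper's own proof: pass to $Y=\mathrm{Min}_{X}(h)$, invoke Theorem \ref{3.4} to conclude $Y$ is systolic, apply the systolic analogue Theorem \ref{3.7} to $Y$, and transfer the resulting invariant object back to $X$ via the isometric embedding of Theorem \ref{3.2}. Your extra care (splitting off the fixed-simplex case, noting that uniform local finiteness passes to $Y$, and checking that thickness of the geodesic survives the embedding) only makes explicit points the paper leaves implicit, so the argument is correct and essentially identical.
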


\begin{proof}

Let $Y = \rm{Min}_{X} (h)$. Since $|h| > 3$, Theorem \ref{3.4} implies that $Y$ is systolic. Then, by Theorem \ref{3.7}, in $Y$ either there is an $h$-invariant simplex (elliptic case)
or there is an $h$-invariant thick geodesic (hyperbolic case). Since, by Theorem \ref{3.2}, $Y^{(1)}$ is isometrically embedded into $X$, this $h$-invariant simplex (elliptic case), respectively this $h$-invariant thick geodesic (hyperbolic case) also belongs to $X$. 

\end{proof}

\begin{theorem}\label{3.9}
Let $h$ be a nonelliptic simplicial isometry of a uniformly locally finite systolic complex $X$.
If there exists an $h^{n}$-invariant geodesic in $X$, then for any vertex $x \in \rm{Min}_{X}(h^{n}) \subset X$, there exists an $h^{n}$-invariant geodesic passing through $x$.
\end{theorem}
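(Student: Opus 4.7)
The plan is to construct an $h^n$-invariant geodesic through the given vertex $x$ by the $h^n$-periodic concatenation trick (as in Lemma \ref{3.1}), and then to show that the resulting local geodesic is in fact globally geodesic by comparing it against the hypothesised axis $\gamma_0$.

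First, I would recall that by Elsner's result cited immediately before Theorem \ref{3.5}, the subcomplex $\rm{Min}_X(h^n)$ is systolic and isometrically embedded in $X$, and the hypothesised invariant geodesic $\gamma_0$ automatically lies inside $\rm{Min}_X(h^n)$ (each of its vertices realises displacement exactly $|h^n|$). Fixing $x \in \rm{Min}_X(h^n)$, choose any geodesic $\alpha$ from $x$ to $h^n(x)$ inside $\rm{Min}_X(h^n)$; by the isometric embedding, $\alpha$ is a geodesic in $X$ of length $|h^n|$. Set $\gamma := \bigcup_{k \in \mathbf{Z}} h^{nk}(\alpha)$. By construction $\gamma$ is an $h^n$-invariant simplicial path through $x$, and by Lemma \ref{3.1} it is an $|h^n|$-local geodesic.

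The main obstacle is to prove that $\gamma$ is a global geodesic. I would argue by contradiction. For any vertex $v \in \gamma_0$ one has $d(v, h^{nN}(v)) = N|h^n|$ exactly, so by the triangle inequality $d(x, h^{nN}(x)) \geq N|h^n| - 2 d(x, v)$ for every $N \geq 1$. Together with the upper bound $d(x, h^{nN}(x)) \leq N|h^n|$ obtained by concatenating $N$ copies of $\alpha$, this forces the stable translation length $\lim_{N \to \infty} d(x, h^{nN}(x))/N$ to equal $|h^n|$. Now suppose $\gamma$ admits a strict shortcut, i.e.\ a pair of parameters $a < b$ with $d(\gamma(a), \gamma(b)) = L < b - a$; Lemma \ref{3.1} forces $b - a > |h^n|$. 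Pick a positive integer $K$ with $K|h^n| \geq b - a$ and set $T := K|h^n|$. By $h^n$-periodicity the same shortcut reproduces inside every $\gamma$-block of length $T$, so pasting one shortcut per block yields, for every $M \geq 1$, a path from $x$ to $h^{nKM}(x)$ of length at most $MT - M(b - a - L)$. Dividing by $KM$ gives stable translation length at most $|h^n| - (b - a - L)/K < |h^n|$, contradicting the previous equality. Hence no such shortcut exists, $\gamma$ is a global $h^n$-invariant geodesic, and the theorem follows. The delicate point is the block-pasting step: one must verify that the shortcut occurring in one block concatenates with the $|h^n|$-geodesic tail of $\gamma$ provided by Lemma \ref{3.1} in the next block to yield a genuine path of the asserted length, so that the linear savings actually accumulate and produce the sub-$|h^n|$ stable translation length.
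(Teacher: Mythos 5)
The paper does not actually prove Theorem~\ref{3.9}; it is quoted verbatim from Elsner (\cite{E2}, Remark on p.~48), so there is no internal proof to compare against. Your argument is correct and is essentially the standard one from that source: build the periodic concatenation $\gamma$ through $x$ and rule out shortcuts by playing the upper bound from pasted shortcuts against the lower bound $d(u,h^{nN}(u))\ge N|h^n|-2d(u,v)$ supplied by a vertex $v$ on the hypothesised axis $\gamma_0$. Two small remarks. First, the ``delicate point'' you flag is not delicate: a concatenation of simplicial paths is a simplicial path and its length is the sum of the lengths, so the estimate $d(\gamma(a),h^{nKM}(\gamma(a)))\le MK|h^n|-M(b-a-L)$ needs no compatibility check with Lemma~\ref{3.1}; indeed you can skip stable translation lengths entirely and compare this directly with the lower bound to get $M(b-a-L)\le 2d(\gamma(a),v)$ for all $M$, a contradiction since $b-a-L\ge 1$. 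Second, the one assertion that deserves its own line is that $\gamma_0$ lies in $\mathrm{Min}_X(h^n)$ with $h^n$ translating it by exactly $|h^n|$: if $a$ denotes the translation amount along $\gamma_0$, then $a\ge|h^n|$ by definition of translation length, while $Na=d(v,h^{nN}(v))\le 2d(v,y)+N|h^n|$ for $y\in\mathrm{Min}_X(h^n)$ gives $a\le|h^n|$ on letting $N\to\infty$. With those points made explicit the proof is complete.
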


For the proof see \cite{E2}, Remark page $48$.

\begin{theorem}\label{3.10}
Let $h$ be a nonelliptic simplicial isometry of a uniformly locally finite $8$-located complex $X$ with the SD'-property. Assume $|h| > 3$. 
If there exists an $h^{n}$-invariant geodesic in $X$, then for any vertex $x \in \rm{Min}_{X}(h^{n}) \subset X$, there exists an $h^{n}$-invariant geodesic passing through $x$.
\end{theorem}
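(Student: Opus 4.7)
The plan is to reduce the statement to its systolic analogue, Theorem \ref{3.9}, by passing to the minimal displacement set of $h^n$ and lifting the conclusion back to $X$, following the pattern established in the proofs of Theorems \ref{3.6} and \ref{3.8}.

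First I would set $Y = \mathrm{Min}_X(h^n)$. To invoke the earlier $8$-located results on $h^n$ rather than $h$, one must verify that $h^n$ is a nonelliptic simplicial isometry of $X$ with $|h^n|>3$. Nonellipticity of $h^n$ is immediate from the existence of an $h^n$-invariant geodesic, which forces $h^n$ to translate strictly and hence to fix no simplex. The inequality $|h^n|>3$ uses that $X$ is Gromov hyperbolic (Theorem \ref{2.4}) and that $h$ is of hyperbolic type: the translation length of $h^n$ along its axis grows linearly with $n$, so that $|h^n|\geq |h|>3$. Once this is in place, Theorem \ref{3.4} applied to $h^n$ yields that $Y$ is systolic, Theorem \ref{3.2} (applied to $h^n$) gives that $Y^{(1)}$ is isometrically embedded in $X$, and $Y$ inherits uniform local finiteness from $X$.

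Next, Lemma \ref{3.1} applied with $h^n$ shows that any $h^n$-invariant geodesic in $X$ has all its vertices in $\mathrm{Min}_X(h^n)=Y$, so in particular the given $h^n$-invariant geodesic lies inside $Y$. Lemma \ref{3.3}, applied with $h^n$ in place of $h$, gives $\mathrm{Min}_Y(h^n)=Y$. Thus $h^n$ acts on the uniformly locally finite systolic complex $Y$ as a nonelliptic simplicial isometry possessing an invariant geodesic, so Theorem \ref{3.9} (applied to the isometry $h^n$ with exponent $1$) produces, for every vertex $x\in Y=\mathrm{Min}_X(h^n)$, an $h^n$-invariant geodesic $\gamma_x\subset Y$ through $x$. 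Since $Y^{(1)}$ is isometrically embedded into $X$ by Theorem \ref{3.2}, the geodesic $\gamma_x$ is also a geodesic in $X$, and it remains $h^n$-invariant.

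The main obstacle is the verification that $|h^n|>3$, which is needed in order to apply Theorems \ref{3.4} and \ref{3.2} to $h^n$ rather than to $h$; aside from this linear-growth input, the argument is a formal combination of Lemmas \ref{3.1} and \ref{3.3} with Theorems \ref{3.2}, \ref{3.4}, and \ref{3.9}, exactly mirroring the structure of the proofs of Theorems \ref{3.6} and \ref{3.8}.
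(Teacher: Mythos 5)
Your overall strategy---reduce to the systolic statement (Theorem \ref{3.9}) by showing that the relevant minimal displacement set is a systolic, isometrically embedded subcomplex---is the same as the paper's, but your specific reduction has a genuine gap. You set $Y=\mathrm{Min}_X(h^n)$ and therefore must apply Theorems \ref{3.2} and \ref{3.4} to the isometry $h^n$, which requires $|h^n|>3$ and nonellipticity of $h^n$. Your justification, that ``the translation length of $h^n$ along its axis grows linearly with $n$, so that $|h^n|\geq|h|>3$'', is not available: nothing in the paper establishes $|h^n|\geq|h|$, and that inequality is false for isometries of general complexes (take a fixed-point-free rotation of a hexagon crossed with a unit translation of a line: every vertex is displaced by $4$ under $h$, but only by $2$ under $h^2$). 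What hyperbolicity would give is $|h^n|\geq n\,\tau(h)$ for the stable translation length $\tau$, and since $\tau(h)$ may a priori be smaller than $|h|$, this does not yield $|h^n|>3$ for the particular $n$ appearing in the hypothesis. A secondary looseness: Lemma \ref{3.1} constructs a quasi-geodesic through a vertex already known to lie in the minimal displacement set; it does not, as you assert, show that an arbitrary $h^n$-invariant geodesic is contained in $\mathrm{Min}_X(h^n)$ --- for that one needs to know that the translation distance of $h^n$ along the invariant geodesic equals $|h^n|$.

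The paper takes a route that sidesteps the $|h^n|>3$ issue: it sets $Y=\mathrm{Min}_X(h)$, so Theorems \ref{3.2} and \ref{3.4} are applied to $h$ itself (where $|h|>3$ is a hypothesis), re-derives the existence of an $h^n$-invariant geodesic in $Y$ via Theorems \ref{3.5} and \ref{3.6}, and then applies Theorem \ref{3.9} inside the systolic complex $Y$ to the isometry $h$ with exponent $n$. The price of that route is the inclusion $\mathrm{Min}_X(h^n)\subset\mathrm{Min}_X(h)$, which the paper uses without proof. To repair your version you must either prove $|h^n|>3$ for the given $n$ (e.g., by a lower bound on the stable translation length in this setting) or restructure the argument around $\mathrm{Min}_X(h)$ as the paper does.
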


\begin{proof}
Let $Y = \rm{Min}_{X}(h)$. Theorem \ref{3.4} implies that $Y$ is systolic.
According to Theorem \ref{3.5}, in $Y$ (and then, by Theorem $3.6$, also in $X$) there exists an $h^{n}$-invariant geodesic for some $n \geq 1$.  Hence, by Theorem \ref{3.9}, for any vertex $x \in \rm{Min}_{X}(h^{n}) \subset Y$, there exists an $h^{n}$-invariant geodesic passing through $x$. Since, by Theorem \ref{3.2}, $Y^{(1)}$ is isometrically embedded into $X$, this implies that for any vertex $x \in \rm{Min}_{X}(h^{n}) \subset Y \subset X$, there exists an  $h^{n}$-invariant geodesic passing through $x$.
\end{proof}

 \section{Classifying spaces with virtually cyclic stabilisers for $8$-located groups}
 
In this section we construct a low-dimensional classifying space for the family of virtually cyclic subgroups of a group acting properly on an $8$-located complex with the $SD'$-property.
The proof relies on the fact that the minimal displacement set of such complex is a systolic subcomplex that embeds isometrically into the complex. We start by giving the systolic analogue of one of the main results the construction will be based on. 

%\begin{theorem}\label{3.11}
%Let $h$ be a hyperbolic isometry of a uniformly locally finite %systolic
%complex $X$, such that $|h| > 3$, and the subcomplex %$\rm{Min}_{X}(h)$
%is the union of axes. Then there is a quasi-isometry
%$c : (Y(h) \times \mathbf{Z},d_{h}) \rightarrow %(\rm{Min}_{X}(h),d_{X})$
%where the metric $d_{h}$ is defined as
%$d_{h}((\gamma_{1},t_{1}), (\gamma_{2},t_{2})) = %d_{Y(h)}(\gamma_{1},\gamma_{2}) + |t_{1}-t_{2}|$,
%and $d_{X}$ is the metric induced from $X$.
%\end{theorem}

%For the proof see \cite{OP}, Theorem $3.3$, page $12$.

%\begin{theorem}\label{3.12}
%Let $h$ be a hyperbolic isometry of a uniformly locally finite %$8$-located complex with the $SD'$-property, such that $|h| > %3$, and the subcomplex $\rm{Min}_{X}(h)$
%is the union of axes. Then there is a quasi-isometry
%$c : (Y(h) \times \mathbf{Z},d_{h}) \rightarrow %(\rm{Min}_{X}(h),d_{X})$
%where the metric $d_{h}$ is defined as
%$d_{h}((\gamma_{1},t_{1}), (\gamma_{2},t_{2})) = %d_{Y(h)}(\gamma_{1},\gamma_{2}) + |t_{1}-t_{2}|$,
%and $d_{X}$ is the metric induced from $X$.
%\end{theorem}

%\begin{proof}

%\end{proof}

\begin{theorem}\label{3.13}
Let $X$ be a systolic locally finite simplicial complex.
For a hyperbolic isometry $h$ whose minimal displacement set is
a union of axes (that is, for $h$ satisfying $(2.1)$) and $|h| > 3$, the graph of axes $(Y(h),d_{Y(h)})$ is quasi-isometric to a simplicial tree.
\end{theorem}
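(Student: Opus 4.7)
My plan is to verify Manning's bottleneck criterion for $(Y(h), d_{Y(h)})$: I would exhibit a constant $\Delta \geq 0$ such that for any two vertices $\gamma_1, \gamma_2 \in Y(h)^{(0)}$, any geodesic $p_0 = \gamma_1, p_1, \ldots, p_n = \gamma_2$ in $Y(h)$ with midpoint $\mu = p_{\lfloor n/2 \rfloor}$, every edge-path from $\gamma_1$ to $\gamma_2$ in $Y(h)$ intersects the ball $B(\mu, \Delta)$. Since $Y(h)$ is a simplicial graph, Manning's theorem then yields the desired quasi-isometry to a simplicial tree.

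First, I would reduce to working inside $\rm{Min}_{X}(h)$. By Elsner's results cited before Theorem \ref{3.5}, $\rm{Min}_{X}(h)$ is a systolic, locally finite subcomplex isometrically embedded in $X$, and by hypothesis $(2.1)$ every vertex lies on some $h$-invariant geodesic. Replacing $X$ by $\rm{Min}_{X}(h)$ does not alter $Y(h)$, so I may assume every vertex of $X$ lies on an axis.

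The key local tool is a controlled projection between neighbouring axes. Given $\gamma_1, \gamma_2 \in Y(h)^{(0)}$ with $d_{\min}(\gamma_1, \gamma_2) \leq 1$, the systolic $6$-largeness of links together with the assumption $|h| > 3$ would force the two axes to run $h$-equivariantly parallel along a combinatorial strip, so that there is a well-defined $h$-equivariant projection $\pi_{1\to 2} : \gamma_1 \to \gamma_2$ that is essentially an isometry. Iterating these projections along a path $\gamma_1 = \delta_0, \delta_1, \ldots, \delta_m = \gamma_2$ in $Y(h)$ produces a sequence of $h$-equivariant, essentially isometric correspondences between the axes visited by the path.

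The bottleneck property is then proved by contradiction. If some path in $Y(h)$ from $\gamma_1$ to $\gamma_2$ avoided a large ball around the midpoint axis $\mu$, then concatenating with the original geodesic through $\mu$ would give a loop in $Y(h)$. Using the projections, this loop lifts to a combinatorial annulus inside $\rm{Min}_{X}(h)$ carrying the $h$-action, whose systole is bounded below by the $6$-large condition on links. The main obstacle, and the technical heart of the argument, is turning this lifted annulus into a quantitative contradiction: precisely this is where $|h| > 3$ enters, ruling out the short-range folding that would otherwise collapse the annulus before any systolic lower bound applies. Controlling the combinatorics of how axes deform across the annulus, and how the translation $h$ acts transversally to $\mu$, will require a careful application of Lemmas analogous to \ref{2.5} and \ref{2.6} inside the systolic subcomplex $\rm{Min}_{X}(h)$.
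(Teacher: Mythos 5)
The first thing to observe is that the paper does not actually prove Theorem \ref{3.13}: the statement is imported verbatim from Osajda--Prytu{\l}a, and the ``proof'' in the text is the citation ``see \cite{OP}, Corollary $4.6$''. So there is no in-paper argument to measure your proposal against; you are attempting to reprove a black-boxed result. For what it is worth, your choice of Manning's bottleneck criterion is the route taken in the cited source, so the top-level strategy is the right one.

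As a proof, however, the proposal has genuine gaps: every step that carries the mathematical content is asserted or deferred rather than established. First, the claim that $d_{\min}(\gamma_1,\gamma_2)\le 1$ forces two axes to run ``$h$-equivariantly parallel along a combinatorial strip'' with an essentially isometric projection is not justified. $h$-invariance only gives closeness at the orbit points $h^n(x)$; between consecutive translates the two geodesics could a priori diverge, and ruling this out requires the convexity/projection machinery for systolic complexes developed in \cite{E2} and \cite{OP} --- this is exactly where the hypotheses $|h|>3$ and $(2.1)$ do real work, and you invoke them only by name. Second, and more seriously, the contradiction step is explicitly left open: you write that the ``technical heart'' is turning the lifted annulus into a quantitative contradiction, but that heart \emph{is} the theorem. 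It is not explained how a loop in the graph of axes avoiding a large ball around the midpoint axis lifts to a genuine combinatorial annulus in $\mathrm{Min}_X(h)$ (the edges of $Y(h)$ must first be realized by strips, which is point one again), nor why local $6$-largeness bounds the systole of such an annulus from below (a link condition alone does not do this; one needs a filling or minimal-surface argument), nor how that bound contradicts the assumed avoidance of the ball. No bottleneck constant $\Delta$ is ever produced. As written, this is a plausible plan rather than a proof; the honest alternatives are to cite \cite{OP} as the author does, or to reproduce their Section 4 argument in full.
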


For the proof see \cite{OP}, Corollary $4.6$, page $21$.

\begin{theorem}\label{3.14}
Let $X$ be a locally finite $8$-located complexes with the $SD'$-property.
For a hyperbolic isometry $h$ whose minimal displacement set is
a union of axes (that is, for $h$ satisfying $(2.1)$) and $|h| > 3$, the graph of axes $(Y(h),d_{Y(h)})$ is quasi-isometric to a simplicial tree.
\end{theorem}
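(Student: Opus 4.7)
The plan is to follow the reduction strategy used systematically throughout Section~$3$: transfer the question from the $8$-located complex $X$ with the $SD'$-property to its systolic subcomplex $Y = \rm{Min}_X(h)$, apply the corresponding systolic result (Theorem \ref{3.13}), and then pull the conclusion back. Concretely, since $|h| > 3$, Theorem \ref{3.4} gives that $Y$ is systolic, and Theorem \ref{3.2} gives that $Y^{(1)}$ is isometrically embedded in $X$. Local finiteness of $Y$ is inherited from that of $X$, because $Y$ is a full subcomplex.

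Next, I would verify that the hypotheses of Theorem \ref{3.13} transfer to $h$ acting on $Y$. By Lemma \ref{3.3} we have $Y = \rm{Min}_Y(h)$, and the isometric embedding of $Y^{(1)}$ into $X$ ensures that the translation length of $h$ viewed as an isometry of $Y$ is the same number $|h| > 3$; in particular $h$ remains hyperbolic on $Y$. The union-of-axes hypothesis (equation $(2.1)$) also descends: any $h$-invariant geodesic in $X$ through a vertex $x \in Y$ is supported on vertices of $\rm{Min}_X(h) = Y$, and by the isometric embedding it is simultaneously an $h$-invariant geodesic in $Y$; conversely, any $h$-invariant geodesic in $Y$ is automatically an $h$-invariant geodesic in $X$.

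The key step is to identify the graph of axes $Y_h$ computed in $X$ with the graph of axes computed in $Y$. The bijection on vertex sets is immediate from the previous paragraph. For edges, if $\gamma_1, \gamma_2$ are $h$-invariant geodesics in $Y$, then $d_{\min}^{X}(\gamma_1, \gamma_2) = d_{\min}^{Y}(\gamma_1, \gamma_2)$ by the isometric embedding of $Y^{(1)}$ in $X^{(1)}$, so the two $1$-skeleta agree. Consequently, the metric space $(Y_h, d_{Y(h)})$ coincides whether it is viewed inside $X$ or inside $Y$, and Theorem \ref{3.13} applied to the systolic locally finite complex $Y$ with the hyperbolic isometry $h$ yields that $(Y_h, d_{Y(h)})$ is quasi-isometric to a simplicial tree.

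The main obstacle is really just the bookkeeping needed to show that every structural ingredient (hyperbolicity of $h$, equation $(2.1)$, and the definition of $Y_h$) is preserved under the passage from $X$ to $Y$; but each of these follows from Lemma \ref{3.3} together with the isometric embedding of Theorem \ref{3.2}, and no new combinatorial arguments of the $8$-location flavor (like the multi-step applications of Lemmas \ref{2.5}--\ref{2.6} used in Theorem \ref{3.2}) are needed at this stage. The substantive geometric work was already completed in Theorems \ref{3.2} and \ref{3.4}.
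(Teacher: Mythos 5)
Your proposal is correct and follows essentially the same route as the paper: pass to $Y=\mathrm{Min}_X(h)$, use Lemma \ref{3.3}, Theorem \ref{3.2} (isometric embedding) and Theorem \ref{3.4} (systolicity) to verify the hypotheses of the systolic result, note that by $(2.1)$ the $h$-invariant geodesics all lie in $Y$, and apply Theorem \ref{3.13}. Your version is in fact somewhat more careful than the paper's, since you explicitly check that the graph of axes and the distances $d_{\min}$ agree whether computed in $X$ or in $Y$.
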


\begin{proof}
Let $Y = \rm{Min}_{X}(h)$. Lemma \ref{3.3} implies that $Y = \rm{Min}_{Y}(h)$.
If there do not exist $h$-invariant geodesics in $X$, take an $h^{n}$-invariant geodesic in $X$, $n > 1$ (see Lemma $4.9$). Assume there exist $h$-invariant geodesics in $X$. These geodesics are also in $Y$ because, according to $(2.1)$, \begin{center}$Y = \rm{span}\{\bigcup \gamma | \gamma$ is an $h$-invariant geodesic \}.\end{center} Theorems \ref{3.2} and \ref{3.3} imply that $Y$ is systolic and $Y^{(1)}$ embeds isometrically into $X$. Then, by Theorem \ref{3.13}, the result follows.

\end{proof}

For the rest of the section,
let $G$ be a group acting properly discontinuously on a uniformly locally
finite $8$-located complex $X$ with the $SD'$-property of dimension $d$.

\begin{theorem}\label{4.1}
 The systolic complex $X$ is a model for $\underline{E}G$.
 \end{theorem}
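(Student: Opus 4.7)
The plan is to verify the two defining conditions of a model for $\underline{E}G = E_{\mathcal{FIN}}G$: that for every subgroup $H \le G$ the fixed-point set $X^H$ is contractible when $H$ is finite and empty when $H$ is infinite. Since $G$ acts on $X$ by simplicial automorphisms, $X$ carries a natural $G$-CW structure, so this is all that is needed.

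The emptiness half is immediate from the hypothesis: properness of the action on the uniformly locally finite complex $X$ forces every simplex-stabiliser, and hence every vertex-stabiliser, to be finite. Any $H$ fixing a vertex is contained in such a stabiliser, so an infinite $H$ cannot fix a point and $X^H = \emptyset$.

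For the contractibility half I would follow the two-step strategy familiar from the systolic case (compare \cite{E2}, \cite{OP}): first produce a fixed point for every finite $H$, then verify that the fixed-point subcomplex inherits the $8$-location and $SD'$-property and is therefore contractible. A fixed point is obtained by taking any finite $H$-orbit $Hv$; it is a bounded subset of $X$, and combining Gromov hyperbolicity (Theorem \ref{2.4}) with the iterative radius-reduction afforded by the triangle and vertex conditions should yield a canonical combinatorial circumcentre, whose uniqueness forces $H$-invariance. For contractibility of $X^H$, I would argue that $X^H$ is a full $8$-located subcomplex satisfying $SD'(O)$ with respect to any chosen vertex $O \in X^H$: given a configuration in $X^H$ to which condition (T) or (V) applies, take the witness produced by the corresponding condition in $X$, and observe via $8$-location and Lemmas \ref{2.5} and \ref{2.6} that the set of valid witnesses spans an $H$-invariant simplex of $X$, so that at least one of its vertices is $H$-fixed and lies in $X^H$. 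Proposition \ref{2.1a} then gives simple connectivity of $X^H$, and contractibility follows from the standard radial contraction along the $SD'$-structure (repeated application of the vertex condition collapses spheres about $O$ inward).

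The main obstacle is the verification that $X^H$ inherits the $SD'$-property, which reduces to showing that the set of valid witnesses for each instance of the triangle and vertex conditions spans a simplex of $X$, so that after restricting to $H$-fixed vertices one still has a witness. This is where the fine combinatorial geometry of $8$-located $SD'$-complexes -- in particular Lemmas \ref{2.5} and \ref{2.6} -- does the heavy lifting. The fixed-point-existence step requires a canonicality argument of the same flavour and is therefore comparably delicate; everything else is formal.
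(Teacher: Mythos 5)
The paper does not actually prove this statement: it is quoted from \cite{ChOs} (Theorem E), where the underlying mechanism is \emph{dismantlability} of the $1$-skeleton. In that approach one produces a dismantling order on the balls of $X$; this simultaneously yields contractibility of $X$, the fact that any finite group acting on $X$ stabilises a simplex (a finite group acting on a finite dismantlable graph fixes a clique), and contractibility of the fixed-point subcomplexes. Your proposal replaces this with a circumcentre argument plus an inheritance argument for the local conditions, and both of these steps have genuine gaps.

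First, the fixed-point step: in a $\delta$-hyperbolic space a bounded set does not have a \emph{unique} circumcentre, only a quasi-centre well defined up to a distance comparable to $\delta$. So from a finite orbit $Hv$ you obtain an $H$-invariant \emph{bounded set} of candidate centres, not a fixed vertex; to pass from an invariant finite set of vertices to an invariant simplex (and hence, after barycentric considerations, a fixed point of the $G$-CW structure) you need precisely the kind of combinatorial input (dismantlability, or a ``single simplex'' projection property) that your sketch does not supply. Note that Theorem \ref{2.4} gives hyperbolicity with a \emph{universal} $\delta$, but that does not make the quasi-centre canonical. Second, the inheritance step: the $SD'$ axioms as stated in Definition \ref{def-2.3} only assert the \emph{existence} of a witness $t$ (condition (V)) or of a nonempty intersection $X_e\cap B_i(O)$ (condition (T)); they do not say that the set of all valid witnesses spans a simplex, and Lemmas \ref{2.5} and \ref{2.6} concern specific configurations of chosen witnesses, not the full witness set. (This is exactly the difference between the $SD'$ property used here and the stronger $SD$ property of weakly systolic complexes, where $X_\sigma\cap B_i(O)$ is required to be a single simplex --- with that stronger axiom your averaging argument would work.) Without that, you cannot conclude that an $H$-fixed witness exists, so the claim that $X^H$ satisfies $SD'$ is unsupported. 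Finally, even granting $SD'(O)$ for $X^H$, Proposition \ref{2.1a} only gives simple connectivity; contractibility of the ``radial collapse'' still has to be proved, which again is what dismantlability is for. The correct repair is to follow \cite{ChOs}: establish dismantlability of (the balls of) $X$ and deduce all three assertions from it, rather than trying to make $X^H$ inherit the defining local conditions.
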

 
 For the proof see \cite{ChOs}, Theorem E.

In order to construct models for the commensurators $N_{G}[H]$, first we show that the group $G$ satisfies condition (C). Using this, in every finitely generated subgroup $K \subseteq N_{G}[H]$ that contains $H$ we find a suitable normal cyclic subgroup. As shown in \cite{OP} for systolic complexes, the quotient
group acts properly on a quasi-tree. 

\begin{lemma}\label{4.2}
The group $G$ satisfies condition (C).
\end{lemma}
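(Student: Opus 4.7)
The strategy is to reduce condition (C) to a statement about translation lengths of powers of $h$, by passing to a power that admits an invariant geodesic, and then exploiting conjugation-invariance of the translation length.

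First I would dispose of the trivial case: if $k=0$ in the hypothesis $gh^{k}g^{-1}=h^{l}$, then $h^{l}=e$, and since $h$ has infinite order $l=0$; symmetrically if $l=0$. So assume $k,l\neq 0$. Because $G$ acts properly discontinuously on the locally finite complex $X$, stabilisers of simplices are finite; hence $\langle h\rangle\cap\mathrm{Stab}(\sigma)$ is trivial for every simplex $\sigma$, and no nonzero power of $h$ can fix a simplex. Thus every $h^{n}$ ($n\neq 0$) is nonelliptic with $|h^{n}|\geq 1$. I would then argue, combining local finiteness with Gromov hyperbolicity of $X$ (Theorem \ref{2.4}), that $|h^{n}|\to\infty$ as $|n|\to\infty$, and pick $N\geq 1$ with $|h^{N}|>3$. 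Applying Theorem \ref{3.6} to $h^{N}$ produces some $m\geq 1$ and an $h^{Nm}$-invariant geodesic $\gamma\subset X$. Set $H:=h^{Nm}$; then $H$ acts on $\gamma$ by translation, and $|H|>0$.

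The heart of the argument is the formula $|H^{j}|=|j|\cdot|H|$ for every $j\in\mathbf{Z}\setminus\{0\}$. Here I would invoke the structural results of Section~3: by Theorem \ref{3.4} and Lemma \ref{3.3}, the subcomplex $Y:=\rm{Min}_{X}(H)$ is systolic and satisfies $Y=\rm{Min}_{Y}(H)$, while Theorem \ref{3.2} makes the inclusion $Y^{(1)}\hookrightarrow X$ isometric. Since $H^{j}$ translates the axis $\gamma\subset Y$ by $|j|\cdot|H|$, the corresponding formula for hyperbolic simplicial isometries of systolic complexes (as developed in \cite{E2,OP}) yields $|H^{j}|_{Y}=|j|\cdot|H|_{Y}$, and the isometric embedding lifts this equality to $X$.

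Finally, raising the given identity to the $Nm$-th power gives $gH^{k}g^{-1}=H^{l}$. Taking translation lengths and using that $g$ is an isometry produces $|k|\cdot|H|=|H^{k}|=|gH^{k}g^{-1}|=|H^{l}|=|l|\cdot|H|$, whence $|k|=|l|$ since $|H|>0$. The main technical obstacle is the first step: verifying that $|h^{n}|$ genuinely grows without bound, since below the threshold $|h^{n}|>3$ none of the structural machinery of Section~3 is available. Once such a power is in hand, Theorems \ref{3.2}, \ref{3.4}, and \ref{3.6} reduce every displacement computation to a known one on the systolic subcomplex $Y$.
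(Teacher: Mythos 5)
Your proposal is correct and follows essentially the same route as the paper: pass to a power of $h$ admitting an invariant geodesic via Theorem \ref{3.6}, then compare $|h^{kn}|=|k|\cdot|h^{n}|$ with $|h^{\pm ln}|=|l|\cdot|h^{n}|$ using conjugation-invariance of the translation length, exactly the chain of equalities the paper writes down. The one point where you go beyond the paper is in explicitly arranging the hypothesis $|h^{N}|>3$ before invoking Theorem \ref{3.6} --- a hypothesis the paper's proof applies silently --- though your claim that $|h^{n}|\to\infty$ is only sketched and would still need the standard argument via the stable translation length (or the positivity of the asymptotic translation number for a properly acting infinite-order element on a hyperbolic complex) to be complete.
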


\begin{proof}
 The proof is similar to the one given in \cite{OP} (Lemma $5.2$). Take arbitrary $g,h \in G$ such that $|h| = \infty$, and assume there are $k,l \in \mathbf{Z}$ such that $g^{-1} h^{k} g = h^{l}$. We have to show that $|k| = |l|$. Since the action of $G$ on
$X$ is proper, the element $h$ acts as a hyperbolic isometry. By Theorem \ref{3.6}, there is in $X$ an $h^{n}$-invariant geodesic for some $n \geq 1$. We get the claim by
considering the following sequence of equalities for the translation length:
\begin{center}
    $|k| \cdot |h^{n}| =  |h^{k \cdot n}| =  |g^{-1} \cdot h^{n \cdot k} \cdot g| =  |h^{\pm l \cdot n}| = |l| \cdot |h^{n}|$.
\end{center}
The first and the last of the equalities follow from the fact that the translation length of an element can be measured on an invariant geodesic, the second one is an easy calculation and the third one is straightforward.
\end{proof}

\begin{lemma}\label{4.3}
Let $K$ be a finitely generated subgroup of $G$, and $h \in K$ a hyperbolic
isometry satisfying $(2.1)$, such that $\langle h \rangle$ is normal in $K$. Then the proper action of
$G$ on $X$ induces a proper action of $K / \langle h \rangle$ on the graph of axes $Y(h)$.
\end{lemma}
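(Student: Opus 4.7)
The plan is to establish three things in sequence: that the full group $K$ acts on the graph $Y(h)$, that this action descends through the quotient by $\langle h\rangle$, and that the induced action of $K/\langle h\rangle$ is proper. Throughout I will use that, by hypothesis, $h$ satisfies $(2.1)$, so $Y(h)$ is defined and its vertices are $h$-invariant geodesics in $\mathrm{Min}_X(h)$.

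First, I fix any $k\in K$. Normality of $\langle h\rangle$ in $K$ gives $khk^{-1}=h^{l}$ for some integer $l$, and applying Lemma \ref{4.2} (condition (C)) forces $|l|=1$, so $khk^{-1}=h^{\pm 1}$. For $x\in\mathrm{Min}_X(h)$ this yields
\[
d(kx,h(kx)) \;=\; d(x,\,k^{-1}hk\,x) \;=\; d(x,\,h^{\pm 1}x) \;=\; |h|,
\]
so $k$ preserves $\mathrm{Min}_X(h)$. For any $h$-invariant geodesic $\gamma$, $h(k\gamma)=k(k^{-1}hk\,\gamma)=k(h^{\pm 1}\gamma)=k\gamma$, so $k$ permutes the vertex set of $Y(h)$. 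Since $k$ is a simplicial isometry of $X$, it preserves $d_{\min}$ between subcomplexes and hence the edge set of $Y(h)$, giving an action of $K$ on the graph $Y(h)$. Because every power of $h$ fixes every $h$-invariant geodesic setwise, $\langle h\rangle$ lies in the kernel of this action, and so it factors through $K/\langle h\rangle$.

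For properness it suffices to prove that for each vertex $\gamma$ of $Y(h)$ the stabilizer $K_{\gamma}\subseteq K$ satisfies $[K_{\gamma}:\langle h\rangle]<\infty$; the stabilizer of $\gamma$ in $K/\langle h\rangle$ is then $K_{\gamma}/\langle h\rangle$ and is finite. Restriction to $\gamma$ gives a homomorphism
\[
\rho\colon K_{\gamma}\longrightarrow \mathrm{Isom}(\gamma)\;\cong\;\mathrm{Isom}(\mathbb{R}).
\]
The kernel $\ker\rho$ fixes every vertex of $\gamma$ (an infinite set), hence is finite, because the $G$-action on $X$ is proper and stabilizers of vertices of $X$ are finite. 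Simplicial isometries permute the discrete vertex set of $\gamma$, so $\rho(K_{\gamma})$ is a discrete subgroup of $\mathrm{Isom}(\mathbb{R})$. Its translation subgroup $T$ is then a discrete subgroup of $\mathbb{R}$ containing $\rho(\langle h\rangle)$, which is the cyclic group generated by the translation of length $|h|$; hence $T=b\mathbb{Z}$ with $b\mid |h|$, giving $[T:\rho(\langle h\rangle)]<\infty$. The full image $\rho(K_{\gamma})$ is at most index $2$ over $T$ (translations versus reflections), so $[\rho(K_{\gamma}):\rho(\langle h\rangle)]<\infty$. Combining with finiteness of $\ker\rho$ shows $K_{\gamma}$ is virtually $\langle h\rangle$, which yields the claim.

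The main obstacle is the last step: although the classification of discrete subgroups of $\mathrm{Isom}(\mathbb{R})$ makes the finite-index statement routine, one has to justify carefully both that $\rho(K_{\gamma})$ really is discrete (using that $\gamma$'s vertex set is discrete in $\gamma$ and that elements of $K$ are simplicial) and that $\ker\rho$ really is finite (using properness of the $G$-action at any vertex of $\gamma$). Once these two points are in place the rest of the argument is bookkeeping.
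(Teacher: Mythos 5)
Your proof is correct, and it takes the same route as the argument the paper defers to (the paper's ``proof'' is only a pointer to \cite{OP}, Lemma $5.3$): show that normality of $\langle h\rangle$ plus condition (C) forces $khk^{-1}=h^{\pm1}$, so that $K$ preserves $\rm{Min}_{X}(h)$ and permutes the axes with $\langle h\rangle$ in the kernel, and then deduce properness from finiteness of $\mathrm{Stab}_{K}(\gamma)/\langle h\rangle$ via the restriction to $\mathrm{Isom}(\mathbb{Z})$ and properness of the $G$-action on $X$. In fact you supply more detail than either the paper or its reference, and the two points you flag as delicate (discreteness of the image and finiteness of the kernel) are exactly the ones that need the simpliciality and properness hypotheses.
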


\begin{proof}
The proof is similar to the one given in \cite{OP}, Lemma $5.3$ in the systolic case.
\end{proof}

\begin{lemma}\label{4.4}
Let $h$ be a hyperbolic isometry of an $8$-located complex with the $SD'$-property $X$. Assume that $|h| > 3$. Then if $h$ satisfies $(2.1)$ then so does $h^{n}$
for any $n \in \mathbf{Z} \setminus \{ 0 \}$.
\end{lemma}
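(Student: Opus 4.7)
The plan is to reduce the lemma to a direct application of Theorem \ref{3.10}. First, I would dispatch the sign of $n$: since $d_X(x, h^n(x)) = d_X(x, h^{-n}(x))$ for every vertex $x$, one has $|h^n| = |h^{-n}|$, $\mathrm{Min}_X(h^n) = \mathrm{Min}_X(h^{-n})$, and a simplicial line is $h^n$-invariant if and only if it is $h^{-n}$-invariant. Therefore condition $(2.1)$ for $h^n$ is equivalent to $(2.1)$ for $h^{-n}$, so it suffices to treat $n \geq 1$. The case $n = 1$ is the hypothesis itself, so I focus on $n \geq 2$.

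The key step is to produce a single $h^n$-invariant geodesic. Because $h$ satisfies $(2.1)$ and is hyperbolic, $\mathrm{Min}_X(h)$ is nonempty and any vertex of it lies on an $h$-invariant geodesic $\gamma \subseteq \mathrm{Min}_X(h)$. Since $h$ is fixed-point-free and simplicial, it acts on $\gamma$ as a translation by some positive integer $a$; the geodesicity of $\gamma$ in $X$ gives $d_X(x,h(x)) = a$ for every $x \in \gamma$, while $x \in \mathrm{Min}_X(h)$ forces $a = |h|$. Consequently $h^n$ translates $\gamma$ by $n|h|$, so $\gamma$ is in particular an $h^n$-invariant geodesic in $X$.

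With this in hand, I would invoke Theorem \ref{3.10} applied to $h$: its hypotheses ($h$ nonelliptic with $|h| > 3$ acting on a uniformly locally finite $8$-located complex with the $SD'$-property, together with the existence of an $h^n$-invariant geodesic just produced) are all satisfied. Its conclusion --- that for every vertex $x \in \mathrm{Min}_X(h^n)$ there exists an $h^n$-invariant geodesic passing through $x$ --- is precisely condition $(2.1)$ for $h^n$.

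The main obstacle is essentially absent here: once Theorem \ref{3.10} is on the table, the proof collapses to a short bookkeeping argument, the only real input being a single $h^n$-invariant geodesic obtained by promoting any axis of $h$. The genuine difficulties were already handled upstream in Theorems \ref{3.2} and \ref{3.4}, which establish the isometric embedding and systolicity of $\mathrm{Min}_X(h)$ and make Theorem \ref{3.10} possible in the $8$-located setting.
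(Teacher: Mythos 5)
Your proposal is correct and follows essentially the same route as the paper, whose entire proof is the one-line observation that an $h$-invariant geodesic (supplied by $(2.1)$ for $h$) is $h^{n}$-invariant, so Theorem \ref{3.10} applies and its conclusion is exactly $(2.1)$ for $h^{n}$. Your write-up merely makes explicit the reduction to $n\geq 1$ and the verification of the hypotheses of Theorem \ref{3.10}, which the paper leaves implicit.
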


\begin{proof}
The result follows by Lemma \ref{3.10} and the fact that an $h$-invariant geodesic is $h^{n}$-invariant.
\end{proof}

\begin{lemma}\label{4.5} 
Let $K$ be a finitely generated subgroup of $N_{G}[H]$ that contains $H$.
Then there is a short exact sequence
\begin{center}
    $0 \rightarrow \langle h \rangle \rightarrow K \rightarrow K / \langle h \rangle \rightarrow 0,$
\end{center}
such that $h \in H$ is of infinite order and the group $K / \langle h \rangle$ is virtually free.

\end{lemma}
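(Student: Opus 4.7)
The plan is to mirror, step by step, the strategy of \cite{OP}, Lemma $5.4$, replacing each systolic ingredient by the $8$-located counterpart established earlier in this paper. Fix any element $h \in H$ of infinite order; then $[\langle h \rangle] = [H]$ automatically. By Lemma \ref{4.2} the group $G$ satisfies condition (C), so Lemma \ref{2.1} supplies an integer $k \geq 1$ with $\langle h^{k} \rangle$ normal in $K$. I then replace $h$ by $h^{k}$, and if necessary by a further power to force $|h| > 3$; the latter is harmless because the translation length of $h^{m}$ on an invariant geodesic is $m \cdot |h|$. Normality of $\langle h \rangle$ in $K$ yields the required short exact sequence immediately.

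The remaining task is to prove that $Q := K / \langle h \rangle$ is virtually free. Since the action of $G$ on $X$ is proper, $h$ is a hyperbolic isometry, and by Theorem \ref{3.6} some power $h^{n}$ admits an invariant geodesic. Absorbing this $n$ into $h$ (using Lemma \ref{4.4} to propagate condition $(2.1)$), Lemma \ref{4.3} gives a proper action of $Q$ on the graph of axes $Y(h)$, while Theorem \ref{3.14} asserts that $Y(h)$ is quasi-isometric to a simplicial tree. The uniform local finiteness of $X$ further ensures that $Y(h)$ has uniformly bounded valence. The conclusion then follows from the standard principle that a finitely generated group admitting a (sufficiently tame) action on a bounded-valence quasi-tree is virtually free, exactly as in the systolic analogue.

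The main obstacle I anticipate is the very last step, extracting \emph{virtually free} from the action on a quasi-tree. The cleanest route is to establish cocompactness of $Q \curvearrowright Y(h)$: a small tubular neighbourhood of a single axis inside $\rm{Min}_{X}(h)$ has compact $K$-fundamental domain by local finiteness and properness of $K \curvearrowright X$, and passing to the quotient by $\langle h \rangle$ should produce a compact $Q$-fundamental domain on $Y(h)$. Once cocompactness is in hand, Milnor--{\v S}varc makes $Q$ quasi-isometric to $Y(h)$, hence to a simplicial tree, and Stallings/Dunwoody accessibility forces virtual freeness. Verifying cocompactness carefully, and confirming that nothing in the systolic argument of \cite{OP} uses a structural feature beyond what the $8$-located $SD'$-setting provides, is the technical core of what remains.
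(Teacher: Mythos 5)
Your argument follows the paper's proof (which is itself modelled on \cite{OP}, Lemma $5.4$) essentially step for step: condition (C) plus Lemma \ref{2.1} to obtain a normal cyclic subgroup, powers of $h$ to arrange $(2.1)$ and $|h|>3$ (via Theorem \ref{3.6} and Lemma \ref{4.4}), Lemma \ref{4.3} for the proper action of $K/\langle h\rangle$ on the graph of axes, and Theorem \ref{3.14} to identify that graph as a quasi-tree. Two remarks. First, you extract the normality exponent $k$ \emph{before} passing to further powers; this works, but you should say why normality survives: $\langle h^{km}\rangle$ is the unique index-$m$ subgroup of $\langle h^{k}\rangle\cong\mathbf{Z}$, hence characteristic in it, hence still normal in $K$. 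The paper sidesteps this by arranging the geometric conditions first (using Lemma \ref{4.4} to note they persist under further powers) and invoking Lemma \ref{2.1} last. Second, your proposed route to virtual freeness — cocompactness of $K/\langle h\rangle \curvearrowright Y(h)$ followed by Milnor--\v{S}varc and Stallings — is not the one the paper or \cite{OP} uses, and the cocompactness claim is doubtful: there is no reason for the action on the graph of axes to be cocompact (for instance $K$ could be $\langle h\rangle$ itself, with trivial quotient, while $Y(h)$ is an infinite graph). The statement actually needed, and used implicitly here, is that a finitely generated group acting \emph{properly} on a bounded-valence graph quasi-isometric to a simplicial tree is virtually free; no cocompactness is required, so the "technical core" you isolate at the end is a detour rather than a gap in the intended argument.
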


\begin{proof}
The proof is similar to the one given in \cite{OP}, Lemma $5.4$.
Choose an element of infinite order $\tilde{h} \in H$ satisfying the following two
conditions:
\begin{enumerate}
\item the set $\rm{Min}_{X}(\tilde{h})$ is the union of axes (see $(2.1)$);
\item the translation length $|\tilde{h}| > 3$.
\end{enumerate}

Both conditions above can be ensured by rising $\tilde{h}$
to a sufficiently large power. Indeed,
by Lemma \ref{3.6}, there exists $n \geq 1$ such that $\tilde{h}^{n}$
satisfies the first condition above. If
$|\tilde{h}^{n}| \leq 3$ then replace it with $\tilde{h}^{4n}$. The element $\tilde{h}^{4n}$ satisfies both conditions. If an element satisfies the conditions above then, by Lemma \ref{4.4}, so does any of its powers. Since $G$ satisfies condition (C),
by Lemma \ref{2.1}, there exists an integer $k \geq 1$ such that $\langle  \tilde{h}^{k} \rangle$ is normal in $K$.

Put $h = \tilde{h}^{k}$. By Lemma \ref{4.3}, the group $K / \langle h \rangle$ acts properly by isometries on
the graph of axes $(Y(h),d_{Y(h)})$, which, by Theorem \ref{3.14}, is a quasi-tree. In conclusion the group $K / \langle h \rangle$ is virtually free.

\end{proof}

The proofs of the next results are similar to the one given for systolic complexes in \cite{OP} (see Lemma $5.5$, Theorem C).

\begin{lemma}
For every $[H] \in \mathcal{VCY} \setminus \mathcal{FIN}$ there exist:
\begin{enumerate}
\item a $2$-dimensional model for $E_{\mathcal{G}[H]} N_{G}[H]$;
\item a $3$-dimensional model for $\underline{E} N_{G}[H]$.
\end{enumerate}
\end{lemma}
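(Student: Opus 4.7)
The plan is to transcribe the systolic argument of \cite{OP}, Lemma $5.5$ into our setting. All the geometric input needed there has been reproduced for $8$-located $SD'$-complexes: Theorems \ref{3.2} and \ref{3.4} provide that $\rm{Min}_{X}(h)$ is a systolic subcomplex isometrically embedded in $X$; Theorem \ref{3.14} provides that the graph of axes $Y(h)$ is quasi-isometric to a simplicial tree; and Lemmas \ref{4.2}, \ref{4.3}, \ref{4.5} supply condition (C), the proper action of $K/\langle h \rangle$ on $Y(h)$, and the short exact sequence with virtually free quotient. Hence no new geometric content is required; only the formal assembly of these pieces remains.

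First I would fix a representative $H$ of $[H]$ and choose an element $h \in H$ of infinite order. By raising $h$ to a sufficient power and using Lemma \ref{4.4} together with the proof of Lemma \ref{4.5}, I would arrange simultaneously that $|h| > 3$, that $\rm{Min}_{X}(h)$ is a union of axes (condition $(2.1)$), and that $\langle h \rangle$ is normal in each finitely generated subgroup of $N_{G}[H]$ containing $H$. Exhausting $N_{G}[H]$ over such finitely generated $K$, with $K/\langle h \rangle$ virtually free, is the structural input driving the rest of the construction.

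For part $(1)$, I would work with the space $Y(h) \times \mathbf{R}$, where $Y(h)$ is the graph of axes and $\mathbf{R}$ records translation along each axis. By Theorem \ref{3.14}, $Y(h)$ is quasi-isometric to a simplicial tree; combined with Lemmas \ref{4.3} and \ref{4.5} this yields an $N_{G}[H]/\langle h \rangle$-equivariant $1$-dimensional classifying space with finite isotropy, assembled from the finitely generated exhaustion. Multiplying by $\mathbf{R}$, on which $\langle h \rangle$ acts by translation of length $|h|$, produces a $2$-dimensional $N_{G}[H]$-$CW$-complex whose point stabilisers are exactly the members of $\mathcal{G}[H]$: a finite subgroup fixes a vertex of the tree-like factor together with all of the $\mathbf{R}$-factor, while a subgroup commensurable with $\langle h \rangle$ stabilises a single axis. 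A direct check that the relevant fixed-point sets are contractible delivers the required $2$-dimensional model for $E_{\mathcal{G}[H]} N_{G}[H]$.

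For part $(2)$, I would apply the L\"uck--Weiermann pushout of \cite{LW} relative to the family $\mathcal{G}[H]$ inside $N_{G}[H]$, combining the $2$-dimensional model from $(1)$ with $1$-dimensional models for $\underline{E}V$ for each $V \in \mathcal{G}[H] \setminus \mathcal{FIN}$ (each such $V$ is virtually cyclic and hence admits a tree as a model for $\underline{E}V$). The resulting pushout has dimension at most $3$ and isotropy in $\mathcal{FIN}$, producing a $3$-dimensional model for $\underline{E}N_{G}[H]$. The hardest geometric step is already completed in Theorems \ref{3.4} and \ref{3.14}; what I expect to be the main remaining obstacle is the colimit bookkeeping needed to pass from the finitely generated $K$ (where Lemma \ref{4.5} gives normality of $\langle h \rangle$) to the full commensurator $N_{G}[H]$, handled exactly as in \cite{OP}.
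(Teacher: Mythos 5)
There is a genuine gap, and it sits at the heart of part (1). Your proposed model $Y(h)\times\mathbf{R}$, with $\langle h\rangle$ translating the $\mathbf{R}$-factor, has the wrong fixed-point behaviour for the family $\mathcal{G}[H]$: a subgroup $V$ commensurable with $\langle h\rangle$ belongs to $\mathcal{G}[H]$, so the definition of $E_{\mathcal{G}[H]}N_{G}[H]$ requires the \emph{pointwise} fixed-point set $(Y(h)\times\mathbf{R})^{V}$ to be contractible; but such a $V$ acts on the $\mathbf{R}$-factor by nontrivial translations, so its fixed-point set is empty. Setwise stabilising an axis is not enough. The product with $\mathbf{R}$ is precisely the construction one uses for $\underline{E}$ (where infinite subgroups must have \emph{empty} fixed sets), not for $E_{\mathcal{G}[H]}$. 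You have in effect swapped the two constructions. The intended argument, following \cite{OP} Lemma $5.5$, is: for each finitely generated $K\subseteq N_{G}[H]$ containing $H$, Lemma \ref{4.5} gives $1\to\langle h\rangle\to K\to Q\to 0$ with $Q=K/\langle h\rangle$ virtually free; a tree $T=\underline{E}Q$, pulled back along the projection, is a $1$-dimensional model for $E_{\mathcal{G}[H]\cap K}K$ (here $\langle h\rangle$ acts trivially, so every subgroup in $\mathcal{G}[H]\cap K$ acts through a finite group and fixes a point of $T$), while $T\times\mathbf{R}$ is a $2$-dimensional model for $\underline{E}K$. A mapping telescope over the directed system of finitely generated subgroups then raises each dimension by one, yielding the claimed $2$ and $3$. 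Note also that the power $k$ with $\langle\tilde h^{k}\rangle$ normal in $K$ depends on $K$, so you cannot fix a single $h$ working for all $K$ at once; this is exactly what the telescope absorbs.

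Part (2) as you describe it is also not viable: the L\"uck--Weiermann pushout of \cite{LW} builds a classifying space for a \emph{larger} family out of one for a smaller family (it is how $\underline{\underline{E}}G$ is assembled from $\underline{E}G$ in the final theorem), so it cannot be run backwards to produce $\underline{E}N_{G}[H]$ from $E_{\mathcal{G}[H]}N_{G}[H]$. The $3$-dimensional model for $\underline{E}N_{G}[H]$ must be built directly, as the telescope of the $2$-dimensional models $T\times\mathbf{R}$ described above. Your identification of the geometric inputs (Theorems \ref{3.4}, \ref{3.14}, Lemmas \ref{4.2}--\ref{4.5}) is correct and these do suffice; it is the formal assembly that needs to be redone along the lines just indicated.
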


\begin{theorem}
There exists a model for
$\underline{\underline{E}}G$ of dimension
\begin{center}
    $\dim \underline{\underline{E}}G =$ 
    $\begin{cases} 
    $d+1$, & \text{if $d \leq 3$,}\\ 
    $d$, & \text{if $d \geq 4$.}
    \end{cases}$
\end{center}
\end{theorem}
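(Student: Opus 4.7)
The plan is to apply the Lück--Weiermann pushout construction \cite{LW} exactly as it is executed for systolic complexes in \cite{OP} (Theorem C). All the required inputs have been assembled in the preceding results.

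First, I would take $X$ itself as a model for $\underline{E}G$ by Theorem \ref{4.1}; this is a $d$-dimensional $G$-$CW$-complex. For every equivalence class $[H]\in[\mathcal{VCY}\setminus\mathcal{FIN}]$, the preceding lemma supplies a $2$-dimensional model for $E_{\mathcal{G}[H]}N_{G}[H]$ and a $3$-dimensional model for $\underline{E}N_{G}[H]$. These are precisely the ingredients that feed the Lück--Weiermann $G$-pushout, producing a model for $\underline{\underline{E}}G$.

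Second, I would carry out the dimension count. Tracking contributions through the Lück--Weiermann pushout---namely the mapping cylinder of the $N_{G}[H]$-equivariant map $\underline{E}N_{G}[H]\to E_{\mathcal{G}[H]}N_{G}[H]$, induced up to $G$ and glued to $\underline{E}G$---gives
\[
\dim \underline{\underline{E}}G \;\leq\; \max\bigl\{d,\; \dim E_{\mathcal{G}[H]} N_{G}[H],\; 1+\dim \underline{E} N_{G}[H]\bigr\} \;\leq\; \max\{d,4\}.
\]
For $d\geq 4$ this is already the claimed value $d$. For $d\leq 3$, one sharpens the estimate to $d+1$ by the same refinement used in \cite{OP}: the attaching maps factor through the $1$-dimensional graph of axes, so the cylinder dimension is absorbed into $d+1$ rather than the generic $4$.

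The main obstacle is not in this final dimension bookkeeping but in the preceding lemma, which in turn rests on the structural results of Section $3$: the isometric embedding and systolicity of $\rm{Min}_{X}(h)$ (Theorems \ref{3.2} and \ref{3.4}), the existence of invariant geodesics (Theorem \ref{3.6}), and the quasi-tree structure of the graph of axes (Theorem \ref{3.14}). Once these results are in place, the proof reduces to the Lück--Weiermann dimension count executed verbatim as in \cite{OP}.
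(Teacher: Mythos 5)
Your proposal follows the same route as the paper, which for this theorem simply invokes the L\"uck--Weiermann pushout and defers to \cite{OP} (Lemma 5.5, Theorem C); your dimension count $\max\{d,\,2,\,1+3\}=\max\{d,4\}$ is exactly the generic bound used there and gives the claimed value for $d\geq 4$. One small correction: the sharpening to $d+1$ for $d\leq 3$ in \cite{OP} comes from replacing the $3$-dimensional model for $\underline{E}N_{G}[H]$ by $X$ itself with the restricted $N_{G}[H]$-action (a $d$-dimensional model, so the mapping-cylinder term becomes $d+1$), not from a factorization of the attaching maps through the $1$-dimensional graph of axes.
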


\end{document}